\documentclass[11pt,a4paper]{amsart}

\usepackage[OT4]{fontenc}
\usepackage{amsthm}
\usepackage{amsmath}
\usepackage{amsfonts}
\usepackage{graphicx}
\usepackage[margin=20mm]{geometry}

\newtheorem{tw}{Theorem}[section]
\newtheorem{cor}[tw]{Corollary}
\newtheorem{lem}[tw]{Lemma}
\newtheorem{df}[tw]{Definition}
\newtheorem{claim}[tw]{Claim}

\newtheorem*{twA}{Theorem A}
\newtheorem*{twB}{Theorem B}

\begin{document}

\begin{abstract}
We prove the analogue of Helly's theorem for systolic complexes. Namely, we show that $7$-systolic complexes have Helly dimension 
less or equal to $1$, whereas $6$-systolic complexes have Helly dimension bounded from the above by $2$.
\end{abstract}

\author{Krzysztof {\'S}wi\k{e}cicki}
\address{Department of Mathematics, Texas A\&M University, MS-3368, College Station, TX 77843-3368, USA}
\email{ksas@math.tamu.edu}
\title{Helly's theorem for systolic complexes}

\maketitle
%\tableofcontents

\section{Introduction}
%In 1913 
Eduard Helly proved his classical theorem concerning convex subsets of Euclidean spaces. Namely, 
suppose that $X_1,X_2,\dots,X_n$ is a collection of convex subsets of $\mathbb{R}^d$ (where $n > d$)
such that the intersection of every $d+1$ of these sets in nonempty. Then the whole family has a nonempty intersection.

This result gave rise to the concept of Helly dimension.
For a geodesic metric space $X$ we define its \emph{Helly dimension} $h(X)$ to be the smallest natural number such that any 
finite family of $(h(X)+1)$-wise non-disjoint convex subsets of $X$ has a non-empty intersection. 
Clearly, Helly's theorem states that Helly dimension of the Euclidean space $\mathbb{R}^d$ is $ \leq d$.
It is very easy to find examples showing that it is exactly equal to $d$.

Mikhail Gromov in \cite{Gro} gave a purely combinatorial characterization of $CAT(0)$ cube complexes as simply connected cube 
complexes in which the links of vertices are simplicial flag complexes. There is a well known result for $CAT(0)$ cube 
complexes which states that, regardless their topological dimension, they all have  Helly dimension equal to one 
(see \cite{R}). 
Note that in this case we additionally demand that convex subsets from the definition are convex subcomplexes.

Systolic complexes were introduced by Tadeusz Januszkiewicz and Jacek {\'S}wi\k{a}tkowski in \cite{JS1} and independently 
by Frederic Haglund in \cite{sys2}. They are connected, simply connected simplicial complexes satisfying some additional 
local combinatorial condition (see Definition \ref{sysdef}), which is a simplicial analogue of nonpositive curvature. Systolic 
complexes inherit lots of $CAT(0)$-like properties, however being systolic neither implies, nor is implied by nonpositive 
curvature of the complex equipped with the standard piecewise euclidean metric.

One would expect for systolic complexes a similar kind of Helly-like properties as for $CAT(0)$ cube complexes. 
However, let us consider an example of a single $n$-dimensional simplex with its codimension $1$ faces being a family 
of convex subcomplexes. Then the intersection of any subfamily of the cardinality $n$ is non-empty, but the intersection 
of the entire family is empty.
This example motivates the following modification of definition of Helly dimension for systolic complexes.
Namely, we say that a systolic complex $X$ has Helly dimension $h(X)\leq d$ if for every $(d+1)$-wise family of
non-disjoint convex subcomplexes there is a single simplex which has a nonempty intersection with all these
subcomplexes.
In this paper we prove that Helly dimension of a systolic complex does not depend on its topological dimension. 
We obtained the following results:

\begin{twA}(see Theorem \ref{tw7} in this text)
Let $X$ be a $7$-systolic complex and let $X_1, X_2, X_3$ be pairwise intersecting convex subcomplexes. Then there
exists a simplex $\sigma \subseteq X$ such that $\sigma \cap X_i \neq \emptyset$ for $i = 1,2,3$.
Moreover, $\sigma$ can be chosen to have the dimension at most two.
\end{twA}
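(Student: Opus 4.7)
My plan is to exhibit the desired simplex as the span of three vertices, one in each pairwise intersection, via a \emph{minimal triangle} argument. First, I choose vertices $a \in X_2 \cap X_3$, $b \in X_1 \cap X_3$, and $c \in X_1 \cap X_2$. Since each $X_i$ is a convex subcomplex, the $1$-skeleton geodesics realizing $d(b,c)$, $d(a,c)$, $d(a,b)$ can be taken inside $X_1$, $X_2$, $X_3$ respectively, yielding paths $\gamma_1 \subseteq X_1$, $\gamma_2 \subseteq X_2$, $\gamma_3 \subseteq X_3$ that form a combinatorial geodesic triangle with corners $a,b,c$. Among all such admissible tuples $(a,b,c,\gamma_1,\gamma_2,\gamma_3)$, I fix one minimizing the perimeter $L = |\gamma_1|+|\gamma_2|+|\gamma_3|$.

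The theorem then reduces to proving $L \leq 3$. Under this bound the vertices $a,b,c$ are pairwise at distance at most one in the $1$-skeleton; since every systolic complex is flag by definition, they span a simplex $\sigma$ of dimension at most $2$. By construction $b \in \sigma\cap X_1$, $c \in \sigma\cap X_2$, and $a \in \sigma\cap X_3$, giving the conclusion.

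To establish $L \leq 3$ I argue by contradiction: assume $L \geq 4$, so without loss of generality $|\gamma_1|\geq 2$. The strategy is to rebuild the triangle with strictly smaller perimeter, contradicting minimality. The main tool is the simplicial projection onto a convex subcomplex in a $6$-systolic complex, from the Januszkiewicz--{\'S}wi\k{a}tkowski machinery: project $a$ onto $X_1$ to obtain a simplex $\tau \subseteq X_1$, select a vertex $b'$ of $\tau$ pointing toward $b$ along $\gamma_1$, and use $7$-systolicity to verify that $b'$ is adjacent to a suitable vertex close to $a$ on $\gamma_2$ or $\gamma_3$. Replacing $b$ by $b'$ then yields a strictly shorter admissible triangle, contradicting minimality.

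The main obstacle is precisely this shortening step: one must show that $7$-systolicity, and not mere $6$-systolicity, rules out the ``bad'' link configurations in which the projection of $a$ onto $X_1$ fails to mesh with $\gamma_2$ and $\gamma_3$ through short edges. This will be a link-level combinatorial analysis leveraging the fact that links in a $7$-systolic complex are $7$-large, i.e.\ contain no full cycles of length less than $7$. That absence of short full cycles is exactly what should force a geodesic triangle of perimeter $\geq 4$ to admit a corner-cutting simplex, so I expect this to be the technical heart of the argument, and the dividing line between the $7$-systolic statement here and the weaker $6$-systolic version addressed in Theorem~B.
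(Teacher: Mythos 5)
Your reduction is sound: minimizing the perimeter $L=|\gamma_1|+|\gamma_2|+|\gamma_3|$ over admissible triples and proving $L\le 3$ would finish the argument (the triangle inequality forces side lengths $(1,1,1)$ or a degenerate configuration where two corners coincide, and flagness then produces the simplex). But the whole content of the theorem is the shortening step, and you do not prove it --- you explicitly defer it as ``the technical heart.'' Moreover, the mechanism you sketch does not work as stated: projecting $a$ onto $X_1$ yields a simplex $\tau\subseteq X_1$, but a vertex $b'$ of $\tau$ has no reason to lie in $X_3$ (or $X_2$), so the triple $(a,b',c)$ is not admissible --- every corner must lie in the intersection of \emph{two} of the subcomplexes, and a projection only places you in one of them. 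You also never argue that the perimeter strictly drops. Keep in mind that the claim you are deferring is genuinely delicate: in a merely $6$-systolic complex it is \emph{false} (Figure \ref{greater} exhibits three pairwise-intersecting convex subcomplexes whose minimal admissible triangle is a hexagon), so any correct argument must pinpoint exactly where $7$-largeness enters; gesturing at ``link-level combinatorial analysis'' is not enough.

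The paper closes this gap by a different route. After normalizing the corners so that the three geodesics either share a common vertex or form an embedded cycle (Lemma \ref{wlozenie}), it fills the minimal geodesic triangle with a minimal surface $S:\Delta_S\to X$, which by Lemma \ref{4.2} is a $7$-systolic disc. Combinatorial Gauss--Bonnet (Lemma \ref{GB}) then does the work globally: interior vertices of a $7$-systolic disc have strictly negative defect, non-corner boundary vertices are shown to lie in at least three triangles (one triangle would give a shortcut contradicting geodesicity; two would give a smaller-area filling contradicting minimality), and the three corners contribute at most $6$; balancing the defect sum forces $\Delta_S$ to be a single $2$-simplex. This is where $7$-systolicity is used, cleanly and quantitatively. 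To salvage your approach you would either need to supply a complete proof of your corner-cutting claim (including why the replacement corner stays in the correct pairwise intersection), or adopt the filling-surface and defect-count argument in place of the projection step.
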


In other words $7$-systolic complexes have Helly dimension less or equal to $1$. 
It is easy to see that this is not necessarily true for $6$-systolic complexes (see Figure \ref{greater}), 
but we prove that any systolic complex has Helly dimension less or equal to $2$. More precisely:

\begin{figure}[ht!]
\begin{center}
\includegraphics[width=0.25\textwidth]{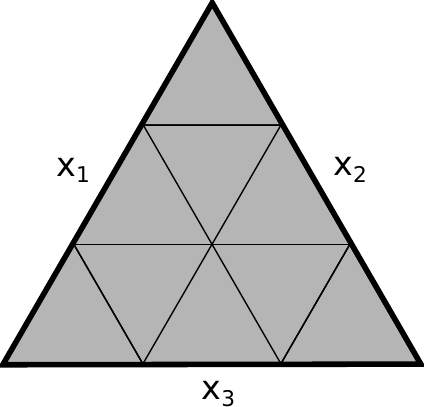}
\caption{A systolic complex with Helly dimension $>1$.}\label{greater}
\end{center}
\end{figure}

\begin{twB}(see Theorem \ref{tw6} in this text)
Let $X$ be a systolic complex and let $X_1, X_2, X_3, X_4$ be its convex subcomplexes such that every three of
them have a nonempty intersection. Then there exists a simplex $\sigma \subseteq X$ such that 
$\sigma \cap X_i \neq \emptyset$ for $i = 1,2,3,4$.
Moreover, $\sigma$ can be chosen to have the dimension at most three.
\end{twB}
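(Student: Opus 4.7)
The plan is to reduce the four-subcomplex Helly problem to a two-dimensional filling question by building a ``tetrahedral'' gluing of minimal disks, and then to hunt inside that gluing for a small simplex meeting all four of the $X_i$.

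First I would choose witness vertices. By the three-wise intersection hypothesis, pick $p_i \in \bigcap_{j \neq i} X_j$ for each $i \in \{1,2,3,4\}$. If any two witnesses $p_i, p_j$ coincide or are joined by an edge, the simplex they span already works as $\sigma$: $p_i$ lies in the three subcomplexes $X_m$ with $m \neq i$, while $p_j$ covers $X_i$. So I may assume the four $p_i$ are pairwise distinct and non-adjacent, and the task becomes to deform them to nearby vertices $v_i \in X_i$ that pairwise span a simplex.

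Next I would build a tetrahedral skeleton. For each pair $i \neq j$, both $p_i$ and $p_j$ lie in the convex subcomplex $\bigcap_{m \notin \{i,j\}} X_m$, so one can pick a geodesic $\gamma_{ij}$ between them inside this intersection. The six geodesics form a tetrahedral frame with the key property that the triangle opposite $p_i$ has each of its three sides in some $X_i \cap X_m$, and so lies entirely in $X_i$. Since convex subcomplexes of systolic complexes are themselves systolic, each such triangle may be filled by a minimal simplicial disk $D_i \subset X_i$; the four disks share their boundary geodesics pairwise and assemble into a simplicial $2$-sphere $S$ mapped into $X$.

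The main analysis would then be to use $S$ to produce $\sigma$. The guiding analogy is Theorem A, whose proof uses the $7$-systolic condition to exhibit, inside a minimal disk filling of a geodesic triangle, a single simplex adjacent to all three sides. In the $6$-systolic case this conclusion fails, as shown by Figure \ref{greater}, but the fourth disk $D_j$ now provides extra control: it sits across the geodesic $\gamma_{ij}$ from $D_i$ and restricts the combinatorics near a candidate ``center'' of $D_i$. I would try to push a chosen witness inward along each $D_i$ using this cross-disk compatibility, until obtaining four pairwise-adjacent vertices $v_i \in X_i$. The hardest part, I expect, will be the combinatorial case analysis at an interior vertex of $D_i$ whose link is a cycle of length at least six: here both the girth-six assumption and the presence of the neighbouring disks must be used, since the per-face analysis that worked in the $7$-systolic setting is blocked by the counterexample to $3$-Helly. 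The dimension bound $\dim \sigma \leq 3$ is then automatic, because $\sigma$ is spanned by at most four vertices, one per $X_i$.
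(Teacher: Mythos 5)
Your first two stages match the paper's strategy: choosing witnesses $p_i\in\bigcap_{j\neq i}X_j$, spanning a tetrahedral frame of geodesics, filling the four faces, and assembling a simplicial $2$-sphere mapped to $X$ is exactly the content of Lemma \ref{spherus}. (One technical caveat: the paper does not fix a single geodesic per pair; it uses the geodesics coming from \emph{minimal} triangular surfaces, which for the same pair of endpoints may differ from one triangle to the next, and therefore has to insert digonal surfaces between the triangular ones to close the sphere. Your shortcut of prescribing one geodesic per pair builds a sphere, but the resulting face fillings are no longer minimal triangular surfaces, and the structural facts about such surfaces that the later argument leans on would not be available.)

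The genuine gap is the final step. ``Push a chosen witness inward along each $D_i$ using cross-disk compatibility until obtaining four pairwise-adjacent vertices'' is not an argument: you give no progress measure guaranteeing termination, and, more importantly, the target simplex generally does not lie on the $2$-sphere at all, so no amount of $2$-dimensional combinatorics on the disks can exhibit it. The paper's mechanism is genuinely $3$-dimensional: it chooses the witnesses to \emph{minimize the sum of pairwise distances} (this extremal choice is what forces the three geodesic segments at $D$ to be degenerate and drives every contradiction in the case analysis), and then invokes Lemma \ref{4.4} to fill the $2$-sphere by a triangulated $3$-ball \emph{with no internal vertices}. Every vertex of that ball therefore maps into one of the four faces, hence into one of the $X_i$; the desired $3$-simplex is then located among the tetrahedra of the ball incident to the edge $(v^{ABD}_A,v^{ABD}_B)$ at $D$, via a case analysis on where the fourth vertex $w'$ sits (on a boundary geodesic, inside a digon, inside a triangular surface near or far from $D'$, or in the face opposite $D$), supported by the Claim about pairwise-adjacent triples in the link of $D$. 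Without the no-internal-vertices $3$-ball filling and the distance-minimality of the witnesses, your outline has no way to produce the simplex or to rule out the degenerate configurations.
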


\newpage

\textbf{Acknowledgements}. 
I am grateful to my advisor Pawe{\l} Zawi{\'s}lak for his constant support, patience and many interesting discussions.
Without his help neither this thesis would happen, nor I would be in the place that I am today.
I also want to thank Jacek {\'S}wi\k{a}tkowski for suggesting the topic of this thesis and many useful corrections and hints how 
to improve this paper.

\section{Preliminaries} 

In this section we recall basic definitions and some results concerning systolic complexes.

A \emph{simplicial complex} $X$ is a set of simplices that satisfy the following conditions. Any face of a simplex in $X$ 
is also in $X$. The intersection of two simplices of $X$ is a face of both of them.
A simplicial complex $X$ is \emph{flag} if any finite set of vertices in $X$ that are pairwise connected by edges spans 
a simplex of $X$.

For any collection $S$ of simplices in a simplicial complex $X$ we define the following.
The \emph{closure} of $S$, denoted by $Cl_X(S)$, is the minimal subcomplex of $X$ that contains every simplex of $S$.
The \emph{star} of $S$, denoted by $St_X(S)$, is the set of all simplices in $X$ that contain some face of a simplex of $S$.
The \emph{link} of $S$, denoted by $lk_{X}(S)$, equals $Cl_X( St_X( S )) - St_X( Cl_X (S))$.

A \emph{cycle} in $X$ is a subcomplex $\gamma$ isomorphic to some triangulation of $S^1$. We define the \emph{length}
$|\gamma|$ of $\gamma$ to be the number of its edges. A \emph{diagonal} of a cycle is an edge connecting its two 
nonconsecutive vertices.

\begin{df}\label{sysdef}(see Definition $1.1$ in \cite{JS1})
 Given a natural number $k \geq 4$ a simplicial complex $X$ is called:
 \begin{itemize}
  \item \emph{$k$-large} if it is flag and every cycle in $X$ of length $3< |\gamma| < k$ has a diagonal,
  \item \emph{locally $k$-large} if links of all simplices in $X$ are $k$-large,
  \item \emph{$k$-systolic} if it is connected, simply connected and locally $k$-large.
 \end{itemize}
\end{df}
One often calls locally $6$-largeness a simplicial nonpositive curvature and it is common to abbreviate $6$-systolic 
to systolic. As we already mentioned, systolic complexes inherit lots of $CAT(0)$-like properties. For example: 
they are contractible (\cite{JS1}), the analogue of the Flat Torus Theorem holds for them (\cite{E1}) and every finite
group acting geometrically on a systolic complex by simplicial automorphisms has a global fix point (\cite{fix}).
Moreover, $7$-systolic complexes are $\delta$-hyperbolic (\cite{JS1}).

Recall that a \emph{geodesic metric space} is a metric space in which every two points can be connected by an 
isometrically embedded segment called \emph{geodesic}.
Clearly, the $1$-skeleton of a systolic complex equipped with the standard combinatorial metric 
(i.e. all edges have length $1$) is a geodesic metric space.
It is important to clarify that we consider only geodesics contained in $1$-skeleton, with both endpoints in the 
$0$-skeleton of a complex.
Thus we can identify a combinatorial geodesic with a sequence of vertices contained in it.
We denote by $( v_0, v_1, \ldots v_n )$ a geodesic starting at $v_0$, passing through vertices $v_1, \ldots v_{n-1}$
and terminating at $v_n$. We also denote by $(v,w)$ a geodesic from $v$ to $w$.

%A subset of a geodesic metric space is called \emph{geodesically convex} if every geodesic with endpoints in this subset 
%is entirely contained in this subset.
%Dealing with simplicial complexes we don't want to consider arbitrary convex subsets but only convex subcomplex.
A subcomplex $A$ of a simplicial complex $X$ is \emph{geodesically convex} if for any two vertices 
$x,y \in A$, $A$ contains every shortest path in $1$-skeleton of $X$ between $x$ and $y$.
A subcomplex $A$ of a simplicial complex $X$ is called \emph{$3$-convex} if it is full and any combinatorial geodesic 
$\gamma$ of length $2$ with both endpoints in $A$ is entirely contained in $A$.
A subcomplex $A$ of a systolic complex $X$ is \emph{convex} if it is connected and \emph{locally $3$-convex}, 
i.e. for every every simplex $\sigma \in A$, $lk_{A}(\sigma)$ is $3$-convex in $lk_{X}(\sigma)$. 
It turns out that a subcomplex of systolic complex is convex iff it is geodesically 
convex (see Proposition $4.9$ in \cite{hs}). %iff czy jest oficjalnie czy tez if and only if

We now recall an important tool often used in the study of systolic complexes. 
We start with some definitions. Let $M$ be a triangulation of a 2-dimensional manifold.
Let $v$ be a vertex of $M$ and let $\chi(v)$ be a number of triangles containing $v$.
A \emph{defect} of vertex $v$ (denoted by $def(v)$) is equal to $6-\chi(v)$ for the interior
vertices and to $3-\chi(v)$ for the boundary vertices. A vertex $v$ is called \emph{negative} 
(respectively \emph{positive}) if $def(v)<0$ (respectively $def(v)>0$).

\begin{lem}(combinatorial Gauss-Bonnet)\label{GB}
 Let $M$ be a triangulation of a 2-dimensional manifold. Then:
 \begin{displaymath}
   \sum_{v\in{\mathrm{M}}}def(v)=6\chi(M),\ 
 \end{displaymath}
 where $\chi(M)$ denotes the Euler characteristic of $M$.
\end{lem}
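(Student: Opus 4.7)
The plan is a straightforward double-counting argument. Let $V, E, F$ denote the number of vertices, edges, and triangles of $M$, and split the vertex count as $V = V_i + V_b$ and the edge count as $E = E_i + E_b$ into interior and boundary parts. The Euler characteristic is then $\chi(M) = V - E + F$, so the target identity becomes
\begin{equation*}
\sum_{v\in M} \mathrm{def}(v) = 6V - 6E + 6F.
\end{equation*}

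First I would rewrite the left-hand side by separating interior and boundary vertices:
\begin{equation*}
\sum_v \mathrm{def}(v) = 6V_i + 3V_b - \sum_v \chi(v).
\end{equation*}
Next, since every triangle contains exactly three vertices, double-counting vertex--triangle incidences gives $\sum_v \chi(v) = 3F$. Substituting, the left-hand side equals $6V_i + 3V_b - 3F$.

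To finish, I need the identity $6V_i + 3V_b - 3F = 6V - 6E + 6F$, which simplifies to $2E = V_b + 3F$. For this I use two further elementary facts about triangulated $2$-manifolds: every triangle has three edges, with each interior edge shared by two triangles and each boundary edge lying in exactly one, yielding $3F = 2E_i + E_b$; and since the boundary of a $2$-manifold is a $1$-manifold (a disjoint union of circles), we have $V_b = E_b$. Combining, $V_b + 3F = E_b + 2E_i + E_b = 2(E_i + E_b) = 2E$, which closes the argument.

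There is really no main obstacle here — the proof is entirely bookkeeping. The only place where the manifold hypothesis is genuinely used is in the two structural facts $3F = 2E_i + E_b$ and $V_b = E_b$; without the assumption that $M$ is a triangulated $2$-manifold (with boundary a disjoint union of circles), those identities could fail and the argument would break down.
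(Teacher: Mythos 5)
Your proof is correct and complete: the paper states this lemma without giving a proof, and your double-counting argument (splitting $\sum_v \mathrm{def}(v)$ into $6V_i+3V_b-3F$ via vertex--triangle incidences, then closing with $3F=2E_i+E_b$ and $V_b=E_b$) is exactly the standard derivation one would supply. You also correctly identify the only places the $2$-manifold hypothesis is used.
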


Let now $X$ be a systolic complex. Any simplicial map $S:\Delta_S \to X$, where $\Delta_S$ is a triangulation of a 
$2$-disc, will be called a \emph{surface}. We say that a surface $S$
is \emph{spanned} by a cycle $\gamma$ if it maps $\partial \Delta_S$ isomorphically onto $\gamma$. By an \emph{area} 
of a simplicial disc we mean the number of its triangles. Similarly, an \emph{area} of a surface $S$
is the number of triangles of $\Delta_S$ on which $S$ is injective. 
% czy dopisać area of geodesic triangle?
If $\Delta_S$ has the minimal area among surfaces extending map $\partial \Delta_S \to X$ we call both the surface and 
the map associated to it \emph{minimal}. 
We say that a \emph{surface $S$ is systolic} if $\Delta_S$ is systolic.
The existence of minimal surfaces is given by the next lemma. Note that the original proof of Lemma $4.2$ in \cite{E1}
deals only with the systolic case. However, the same reasoning applies for the $k$-systolic case.

\begin{lem}(see Lemma $4.2$ in \cite{E1})\label{4.2}
 Let $X$ be a $k$-systolic complex and let $S^1$ be a triangulated circle. Then any simplicial map $f: S^1 \to X$ can be 
 extended to a simplicial map $F:\Delta \to X$, where $\Delta$ is a $k$-systolic disc such that $\partial \Delta = S^1$. 
 Moreover, any minimal surface extending $f$ is $k$-systolic.
\end{lem}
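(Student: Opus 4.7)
The lemma breaks into two parts: (i) \emph{some} simplicial extension of $f$ exists, and (ii) any extension of minimal area is $k$-systolic. For (i), simple connectivity of $X$ makes $f$ null-homotopic, and a standard simplicial-approximation argument applied to a continuous null-homotopy (subdividing the bounding disc as needed) produces a simplicial extension $F_0\colon\Delta_0\to X$ with $\partial\Delta_0=S^1$ and $F_0|_{\partial\Delta_0}=f$. From the nonempty family of such extensions select one $F\colon\Delta\to X$ of smallest area; the minimum is attained since areas are nonnegative integers.

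The substantive content is (ii). Since $\Delta$ is a disc, connectivity and simple connectivity are automatic; the only point to verify is local $k$-largeness. Links of edges and triangles in the $2$-dimensional complex $\Delta$ are trivial, and the link of a boundary vertex is an arc which contains no cycles, hence is vacuously $k$-large. Thus it suffices to show that for every interior vertex $v$ of $\Delta$ the link $lk_\Delta(v)$ — which is a cycle $(w_1,\ldots,w_\ell)$ — has length $\ell \geq k$ (the case $\ell=3$ is also allowed, but irrelevant below).

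Suppose for contradiction that $3 < \ell < k$. The images $F(w_1),\ldots,F(w_\ell)$ form a closed walk of length at most $\ell$ inside $lk_X(F(v))$. Since $X$ is $k$-systolic, $lk_X(F(v))$ is $k$-large, and iterated use of the diagonal property (combined with flagness of $X$) shows that the abstract $\ell$-gon $w_1\cdots w_\ell$ admits a triangulation into $\ell-2$ triangles whose $F$-images are simplices of $lk_X(F(v))$. Now perform \emph{disc surgery}: remove from $\Delta$ the open star of $v$ (an umbrella of $\ell$ triangles) and glue the new triangulated $\ell$-gon into the resulting hole. The outcome $\Delta'$ is a simplicial disc with the same boundary $S^1$, and $F$ together with the new filling determines a simplicial extension $F'\colon\Delta'\to X$ still restricting to $f$ on the boundary. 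The total number of triangles has strictly decreased, and comparing the injective-triangle contributions of the removed umbrella and the inserted filling yields a surface of strictly smaller area than $F$, contradicting minimality.

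The chief obstacle is making the surgery count rigorous: because the paper's area counts only triangles on which the map is injective, one must control how the surgery affects the injective-triangle count in degenerate configurations — for instance when $F$ already identifies some vertices of the umbrella so that several umbrella triangles are non-injective, and the inserted triangles inside $lk_X(F(v))$ might also be non-injective. This is resolved by a case analysis, possibly supplemented by preliminary folding moves on the umbrella or by a lexicographic refinement of the complexity that first minimizes area and then total triangle count. This combinatorial bookkeeping is exactly the core of the proof of Lemma~$4.2$ in \cite{E1}, and, as the paper notes, the argument transfers verbatim from the $6$-systolic to the $k$-systolic setting.
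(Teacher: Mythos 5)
The paper does not prove this lemma at all: it is quoted from Elsner's Lemma~4.2 (itself a variant of the minimal filling diagram results of Januszkiewicz--\'Swi\k{a}tkowski), with only the remark that the argument transfers from the $6$-systolic to the $k$-systolic case. Your reconstruction follows exactly that standard route --- existence of a filling from simple connectivity plus simplicial approximation, then ``minimal area forces large interior links'' via surgery on the umbrella of a bad interior vertex --- so in outline it is the right proof. Two points, however, need repair. First, your parenthetical that ``the case $\ell=3$ is also allowed'' is wrong: a $3$-cycle is not a flag complex, so an interior vertex of degree $3$ already violates local $k$-largeness and must be excluded. Fortunately the same mechanism excludes it: the three images are pairwise adjacent in the $k$-large (hence flag) complex $lk_X(F(v))$, so they span a $2$-simplex and the three umbrella triangles can be replaced by one.

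Second, the step you defer --- controlling the injective-triangle count under surgery --- is not a side issue but the actual content of the proof, and as stated your argument can fail without it. If $F$ collapses some or all of the $\ell$ umbrella triangles (e.g.\ $F(w_i)=F(w_{i+1})$ or $F(w_i)=F(v)$ for some $i$), the removed umbrella may contribute $0$ to the area while the inserted $\ell-2$ triangles contribute positively, so the surgery need not decrease area; moreover the link of $F(v)$ may then receive only a degenerate closed walk rather than an embedded cycle, so the diagonal property cannot be invoked directly; and gluing diagonals $w_iw_j$ that already occur as edges of $\Delta$ outside the star can destroy the disc structure. The standard fix is to first prove that a minimal surface is nondegenerate (injective on every simplex) --- by a separate reduction argument or by minimizing a lexicographic complexity (area, then total number of triangles), as you suggest --- and only then run the link-surgery on genuinely embedded links. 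You correctly identify this as the chief obstacle and name plausible remedies, but until one of them is carried out the proof is a plan rather than a proof; spelling out the nondegeneracy reduction would close the gap and would indeed apply verbatim for every $k\geq 6$, as the paper asserts.
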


A surface $S:\Delta_S \to X$ is called \emph{flat} if $\Delta_S$ is \emph{flat disc}, i.e. $\Delta_S^{(1)}$
can be isometrically embedded into $1$-skeleton of the equilaterally triangulated Euclidean 
plane $\mathbb{R}^2_{\Delta}$. We have the following easy characterization of flatness:

\begin{lem}(see Theorem $3.5$ in \cite{E1})\label{flatkus}
 A simplicial disc $\Delta$ is flat if and only if it satisfies the following three conditions:
 \begin{enumerate}
  \item every internal vertex of $\Delta$ has defect $0$,
  \item $\Delta$ has no boundary vertices of defect less than $-1$,
  \item on $\partial \Delta$ any two negative vertices are separated by a positive one.
 \end{enumerate}
\end{lem}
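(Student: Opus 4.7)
My plan is to prove both implications separately, with the forward direction essentially an angle-chasing exercise and the reverse direction a developing-map construction exploiting simple connectivity of the disc.

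For the forward direction, suppose $\iota : \Delta^{(1)} \hookrightarrow (\mathbb{R}^2_\Delta)^{(1)}$ is an isometric embedding. Each triangle of $\Delta$ sits in the plane as an equilateral triangle of side $1$ with angle $\pi/3$ at each corner. For an internal vertex $v$, its star is a full fan of $\chi(v)$ triangles whose images must cover a neighborhood of $\iota(v)$ without overlap, forcing the total angle $\chi(v)\cdot \pi/3$ to equal $2\pi$; hence $\chi(v)=6$ and $\mathrm{def}(v)=0$, proving (1). For a boundary vertex with $\chi(v)\ge 5$, the two boundary-neighbors $a,b$ of $v$ would subtend an interior angle of at least $5\pi/3$ at $\iota(v)$, placing $\iota(a),\iota(b)$ at Euclidean distance $\le 1$ in the plane and thus at combinatorial distance $1$ in $(\mathbb{R}^2_\Delta)^{(1)}$; but $a$ and $b$ are at combinatorial distance $2$ in $\Delta$, contradicting the isometry, so (2) follows. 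For (3), if a stretch of boundary $v_0,v_1,\dots,v_k$ has $v_0$ and $v_k$ negative and each interior $v_i$ of defect $\le 0$, then summing interior angles along this arc gives $\ge (k+1)\pi$, which forces $\iota(v_0)$ and $\iota(v_k)$ too close in the plane relative to their combinatorial distance in $\Delta$, again contradicting the isometry.

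For the reverse direction, I construct $\iota$ by a developing map. Fix a triangle $T_0\subset \Delta$ and an isometric embedding of $T_0$ into $\mathbb{R}^2_\Delta$. Extend across any triangle sharing an edge with an already-placed one by reflecting along that edge. Condition (1) guarantees that the six triangles around an internal vertex close up exactly after total angle $2\pi$, so the developing map is locally consistent around internal vertices, while (2) and (3) ensure the analogous local injectivity around boundary vertices (no boundary vertex has enough triangles to wrap more than once, and no stretch of boundary arc folds back on itself). Because $\Delta$ is a disc, and in particular simply connected, there is no monodromy: the placement of each triangle is independent of the sequence of triangles used to reach it.

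The main obstacle is upgrading the locally well-defined developing map to a global isometric embedding of the $1$-skeleton. I expect to argue by contradiction using combinatorial Gauss–Bonnet (Lemma \ref{GB}). If $\iota$ failed to be injective, there would be a minimal sub-disc $\Delta'\subseteq \Delta$ whose boundary cycle maps into $\mathbb{R}^2_\Delta$ with some identification; using (1) and the fact that $\Delta'$ inherits vanishing internal defects, the sum $\sum_v \mathrm{def}(v) = 6$ over $\partial\Delta'$ would have to come from a small number of positive boundary defects, and condition (3) would preclude the arrangement of negative defects this requires. The isometry of $1$-skeletons then follows, since the only way two vertices of $\Delta$ could be closer in $\mathbb{R}^2_\Delta$ than in $\Delta$ is by a "shortcut" triangle or edge, but (2) and (3) rule out the defect configurations that would create such a shortcut in the image.
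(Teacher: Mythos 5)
The paper does not actually prove Lemma \ref{flatkus}: it is quoted from Elsner \cite{E1} (Theorem 3.5), so there is no internal proof to compare your attempt against, and it has to be judged on its own. Your forward direction is essentially sound: an isometric embedding of $1$-skeleta carries the link of each vertex of $\Delta$ injectively into the hexagonal link of its image, which forces $\chi(v)=6$ at interior vertices and $\chi(v)\le 4$ at boundary vertices, and this is the right mechanism for (1) and (2). Two caveats: you should handle the degenerate case in (2) where the two boundary neighbours $a,b$ of $v$ are already adjacent in $\Delta$ (then $d_\Delta(a,b)=1$ and your distance comparison gives no contradiction), and your argument for (3) is only gestured at rather than carried out.

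The genuine gap is in the reverse direction. The developing map and the absence of monodromy (from condition (1) plus simple connectivity of the disc) are fine, but the two points you yourself identify as the main obstacle --- global injectivity, and the stronger statement that the map is an \emph{isometric} embedding of $1$-skeleta --- are where all the work lies, and your sketch closes neither. Concretely: (i) your proposed Gauss--Bonnet contradiction is applied to a minimal sub-disc $\Delta'\subseteq\Delta$ witnessing non-injectivity, but conditions (2) and (3) constrain defects only along $\partial\Delta$, not along $\partial\Delta'$; a vertex of $\partial\Delta'$ that is interior to $\Delta$ has six triangles in $\Delta$ but an uncontrolled number in $\Delta'$, so the bound on $\sum_{v\in\partial\Delta'}\mathrm{def}_{\Delta'}(v)$ you need does not follow from the hypotheses as stated. (ii) Even granting injectivity, the claim that no two vertices become closer in $(\mathbb{R}^2_\Delta)^{(1)}$ than they are in $\Delta^{(1)}$ is dismissed via an unspecified ``shortcut'' configuration; note that conditions (2) and (3) do \emph{not} make the image convex (interior angle $4\pi/3$ is allowed at boundary vertices, so the boundary may zigzag), so this is not a convexity statement and needs an actual argument, e.g.\ producing for each pair of vertices a path inside $\Delta$ whose image is a geodesic of $\mathbb{R}^2_\Delta$. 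As written, the reverse implication is a plan rather than a proof.
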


We now focus on geodesic triangles in systolic complexes. First, we recall the following:

\begin{lem}(see Lemma $4.3$ in \cite{E2})\label{4.3}
 If $x_0, x_1, x_2$ are vertices of a systolic complex $X$, then for $i = 0, 1, 2$ there exist geodesics $\gamma_i$  with 
 endpoints $x_{i-1}$ and $x_{i+1}$ (we use the cyclic order of indices) such that:
 \begin{enumerate}
  \item $\gamma_i \cap \gamma_{i+1}$ is a geodesic (possibly degenerated) with endpoints $x_{i-1}$ and $x'_{i-1}$
  \item if we denote the subgeodesic (possibly degenerated) with endpoints $x'_{i-1}$ and $x'_{i+1}$ by 
  $\gamma'_i \subset \gamma_i$, then either $x'_0=x'_1=x'_2$ or a minimal surface $S: \Delta_S \to X$ spanning the cycle 
  $\gamma'_0 \ast \gamma'_1 \ast \gamma'_2$ has an equilaterally triangulated equilateral triangle as the domain.
 \end{enumerate}
\end{lem}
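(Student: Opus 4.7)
The plan is to construct three geodesics $\gamma_{0},\gamma_{1},\gamma_{2}$ by maximising the pairwise overlaps at the corners $x_{0},x_{1},x_{2}$, fill the resulting ``inner'' triangle by a minimal surface, and then use combinatorial Gauss--Bonnet to force this surface to be an equilaterally triangulated equilateral triangle.

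First I would take any initial triple of geodesics joining the three pairs in $\{x_{0},x_{1},x_{2}\}$ and iteratively replace them so as to maximise the sum $\sum_{i}|\gamma_{i-1}\cap\gamma_{i}|$. Since each overlap length is a non-negative integer bounded by $\min\{d(x_{i-1},x_{i}),d(x_{i},x_{i+1})\}$, the procedure terminates at an optimal triple. A short verification, using the flag condition and the fact that two geodesics sharing an endpoint admit a well-defined maximal common initial segment, shows that each $\gamma_{i-1}\cap\gamma_{i}$ is a connected subgeodesic starting at $x_{i}$. Denoting its other endpoint by $x'_{i}$, condition $(1)$ is immediate.

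If $x'_{0}=x'_{1}=x'_{2}$ we are in the first alternative of $(2)$. Otherwise, let $S:\Delta_{S}\to X$ be a minimal surface spanning the nondegenerate cycle $\gamma'_{0}\ast\gamma'_{1}\ast\gamma'_{2}$; by Lemma~\ref{4.2} the disc $\Delta_{S}$ is systolic. Let $y_{0},y_{1},y_{2}\in\partial\Delta_{S}$ be the preimages of $x'_{0},x'_{1},x'_{2}$. Combinatorial Gauss--Bonnet (Lemma~\ref{GB}) gives $\sum_{v}def(v)=6$. Systolicity forces $def(v)\leq 0$ at every interior vertex, since its link is a $6$-large cycle and therefore of length at least $6$. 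For a non-corner boundary vertex $v$ on some $\gamma'_{i}$ one first argues $\chi(v)\geq 2$: otherwise the two boundary neighbors of $v$ would span a triangle with $v$ in $X$ and hence be adjacent, contradicting that they lie at distance $2$ along the geodesic $\gamma'_{i}$. One then rules out $\chi(v)=2$ using maximality of overlaps: the local diamond at $v$ contains an interior vertex $z$ adjacent to both boundary neighbors of $v$, and rerouting $\gamma_{i}$ through $z$ (and propagating such replacements along $\gamma'_{i}$) yields, once the chain reaches one of the adjacent corners, a triple of geodesics with strictly larger total overlap. Hence $def(v)\leq 0$ at every non-corner boundary vertex as well, and combined with $\chi(y_{i})\geq 1$ at the corners this forces the Gauss--Bonnet sum to be attained with $def(y_{i})=2$ at the three corners and $def=0$ everywhere else. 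Lemma~\ref{flatkus} then yields that $\Delta_{S}$ is flat, and three corner angles of $\pi/3$ joined by straight boundary arcs force it to be an equilaterally triangulated equilateral triangle.

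The main obstacle is precisely the maximality argument excluding $\chi(v)=2$ at a non-corner boundary vertex. The local replacement of $v$ by the interior neighbor $z$ does not by itself alter any of the three overlap lengths, so one has to show that a chained sequence of such replacements along $\gamma'_{i}$ must eventually reach a corner and honestly extend the overlap there, without accidentally shortening some other overlap on the way. Controlling how such modifications propagate through the interior of $\Delta_{S}$ is the most delicate point of the proof.
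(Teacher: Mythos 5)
The paper does not prove this lemma at all: it is quoted verbatim from Lemma~4.3 of \cite{E2} (Elsner), so there is no in-paper argument to compare against. Your skeleton --- pick a good triple of geodesics, fill the inner cycle by a minimal surface, and use combinatorial Gauss--Bonnet plus Lemma~\ref{flatkus} to force an equilaterally triangulated equilateral triangle --- is the standard and correct strategy, and it is exactly the strategy this paper itself uses later in the $7$-systolic case (Theorem~\ref{tw7}). The degenerate subcases (e.g.\ exactly two of the $x'_i$ coinciding, giving a digon whose two corners contribute at most $4$ to a sum that must equal $6$) are also ruled out by the same defect count, so the dichotomy in part $(2)$ does come out of your computation.

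However, the step you yourself flag as the main obstacle is a genuine gap, and it stays a gap because you chose the wrong extremal quantity. Maximising $\sum_i|\gamma_{i-1}\cap\gamma_i|$ cannot exclude $\chi(v)=2$ at a non-corner boundary vertex: as you observe, replacing $v$ by the opposite vertex $z$ of its two triangles changes no overlap length, and there is no reason the chain of such replacements ever reaches a corner, so the ``propagation'' argument has nothing to terminate on. The correct auxiliary minimisation is on \emph{area}: among all admissible triples of geodesics (with the overlaps already arranged to be initial segments, which itself requires the rerouting argument of Lemma~\ref{wlozenie} rather than just the flag condition), choose one for which a minimal surface spanning $\gamma'_0\ast\gamma'_1\ast\gamma'_2$ has minimal area. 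If some non-corner boundary vertex $v'_i$ lies in exactly two triangles with common opposite vertex $w'$, then $(v'_{i-1},w',v'_{i+1})$ is again a geodesic segment, so rerouting $\gamma_i$ through $S(w')$ produces another admissible triple whose inner cycle is spanned by the old surface with two triangles deleted --- strictly smaller area whether or not $S(w')$ happens to lie on another $\gamma_j$ (in that case the inner triangle only shrinks further). This contradiction kills $def(v)=1$ on the boundary, and the rest of your Gauss--Bonnet bookkeeping then goes through. This is precisely the move the paper makes in the proof of Theorem~\ref{tw7}; without replacing your overlap-maximisation by this area-minimisation, your proof is incomplete.
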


\begin{figure}[ht!]
\begin{center}
\includegraphics[width=0.29\textwidth]{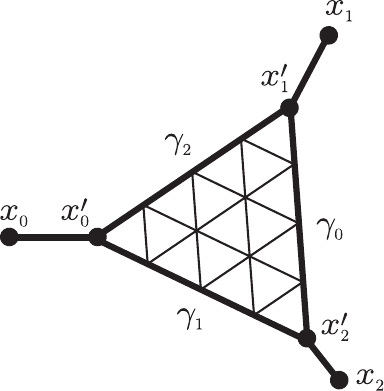}
\caption{A domain of a triangular surface.}\label{rownoboczny}
\end{center}
\end{figure}

Before we move forward, we introduce some terminology.
Let $z'_1, z'_2, \ldots z'_n \in S^1$ be cyclically ordered different points on a circle. 
A \emph{disc with $n$ horns} is a quotient space $D^2 \bigsqcup {\{1,2,\ldots n\} \times [0,1] }/_\sim$ where
$\sim$ is an equivalence relation given by $z'_i \sim (i,0)$. Points $x'_i=[z'_i]_\sim$ are called \emph{bottoms} of 
the horn and points $x_i=[(i,1)]_\sim$ are called \emph{tops} of the horn.
For every $i=1,2, \ldots n$ there is a distinguished segment from $x_i$ to $x_{i+1}$, namely
$[x_i,x_{i+1}]=[x_i,x'_i] \cup [x'_i,x'_{i+1}] \cup [x'_{i+1},x_{i+1}]$, where $[x'_i,x'_{i+1}]$ is an arc 
in $S^1$ connecting bottoms $x'_i$ and $x'_{i+1}$.
As a corollary of Lemma \ref{4.3} we have the following:

\begin{cor}\label{generalus}
 Suppose that $v_0, v_1, v_2$ are three distinct vertices in a systolic complex $X$. Then there exist a simplicial map
 $S_{v_0, v_1, v_2} : T_{v_0, v_1, v_2} \to X$ satisfying one of the following conditions:
 \begin{enumerate}
  \item $T_{v_0, v_1, v_2}$ is a segment and $S_{v_0, v_1, v_2}$ is an isometry (this holds iff one of the points 
  $v_0, v_1, v_2$ lies on some geodesic connecting other two of them).
  \item $T_{v_0, v_1, v_2}$ is a tripod and $S_{v_0, v_1, v_2}$ is an isometry (this holds iff there exist geodesics
  $\gamma_1,\gamma_2,\gamma_3$ connecting $v_0, v_1, v_2$ such that $\gamma_1 \cap \gamma_2 \cap \gamma_3  \neq \emptyset$)
  \item $T_{v_0, v_1, v_2}$ is a equilaterally triangulated equilateral disc with $\leq 3$ horns and
  $S_{v_0, v_1, v_2}$ maps the segment $[x_i,x_{i+1}]$ isometrically onto some geodesic $[v_i,v_{i+1}]$.
 \end{enumerate}
\end{cor}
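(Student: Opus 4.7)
The plan is to deduce the corollary directly from Lemma \ref{4.3} by a case analysis on the two alternatives in part (2) of that lemma, packaging the three geodesics $\gamma_0,\gamma_1,\gamma_2$ together with the minimal surface they bound into a single simplicial model.

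First I would apply Lemma \ref{4.3} to the triple $v_0=x_0$, $v_1=x_1$, $v_2=x_2$ to obtain geodesics $\gamma_0,\gamma_1,\gamma_2$ (where $\gamma_i$ runs between $v_{i-1}$ and $v_{i+1}$) together with the vertices $x_0',x_1',x_2'$. The key structural observation is that, by part (1) of the lemma, the intersection $\gamma_{i-1}\cap\gamma_{i+1}$ is a geodesic from $v_i$ to $x_i'$; I will call this the \emph{$i$-th horn-piece} and denote its length by $\ell_i\ge 0$. Similarly, $\gamma_i'$ is a geodesic segment from $x_{i-1}'$ to $x_{i+1}'$ lying inside $\gamma_i$.

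Next, the case where $x_0'=x_1'=x_2'=:p$. Then the three geodesics $\gamma_0,\gamma_1,\gamma_2$ all pass through $p$, and each $\gamma_j$ decomposes as the concatenation of two horn-pieces meeting at $p$. I let $T_{v_0,v_1,v_2}$ be the simplicial tripod with center $p$ and three edges of combinatorial lengths $\ell_0,\ell_1,\ell_2$, and define $S_{v_0,v_1,v_2}$ on each leg to be the simplicial isometry onto the corresponding horn-piece. Concatenating two legs gives a geodesic in $X$ (it is one of the $\gamma_j$), so $S_{v_0,v_1,v_2}$ is an isometry. If some $\ell_i=0$, then $v_i=p$ lies on the geodesic from $v_{i-1}$ to $v_{i+1}$ and the tripod degenerates to a segment, yielding case (1); otherwise we are in case (2).

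Finally, the case where the minimal surface $S_0\colon\Delta_S\to X$ spanning $\gamma_0'\ast\gamma_1'\ast\gamma_2'$ has an equilaterally triangulated equilateral triangle as its domain. Let $q_0,q_1,q_2\in\partial\Delta_S$ be the preimages of $x_0',x_1',x_2'$, one at each corner. I form $T_{v_0,v_1,v_2}$ by attaching to $\Delta_S$, at each corner $q_i$ with $\ell_i>0$, a combinatorial segment of length $\ell_i$; these attached segments are the horns, with tops $x_i=v_i$ and bottoms $x_i'=q_i$. The map $S_{v_0,v_1,v_2}$ is defined to be $S_0$ on $\Delta_S$ and to map each horn isometrically onto the corresponding horn-piece $\gamma_{i-1}\cap\gamma_{i+1}$. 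To verify the assertion about the segments $[x_i,x_{i+1}]$, observe that by construction this path is the concatenation of the horn at $v_i$ (length $\ell_i$), the triangle side from $q_i$ to $q_{i+1}$ (mapped by $S_0$ onto $\gamma_{i-1}'$), and the horn at $v_{i+1}$ (length $\ell_{i+1}$); by the definition of $\gamma_{i-1}$ in Lemma \ref{4.3} this concatenation is precisely the geodesic $\gamma_{i-1}$, and the restriction is an isometry onto it.

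The main obstacle, though minor, is checking in the third case that the map really is simplicial and that the image of each $[x_i,x_{i+1}]$ is a single geodesic of $X$ rather than just a path of the right length; this reduces to the fact already built into Lemma \ref{4.3} that $\gamma_{i-1}$ decomposes as horn-piece $\cup\,\gamma_{i-1}'\cup$ horn-piece with matching endpoints, so no extra argument is required.
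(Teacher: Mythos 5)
Your proposal is correct and is exactly the intended derivation: the paper offers no written proof beyond "As a corollary of Lemma \ref{4.3}", and the argument it has in mind is precisely your case split on the two alternatives of that lemma, with the tripod/segment arising when $x_0'=x_1'=x_2'$ and the horned disc obtained by attaching the segments $\gamma_{i}\cap\gamma_{i+1}$ (when nondegenerate) to the corners of the equilateral domain. Your index bookkeeping and the decomposition of each $\gamma_{i-1}$ into horn-piece $\cup\,\gamma_{i-1}'\cup$ horn-piece check out against the statement of Lemma \ref{4.3}.
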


A map $S_{v_0, v_1, v_2}$ is called \emph{triangular surface} spanned on vertices $v_0, v_1, v_2$. It is called 
minimal if $1$ or $2$ holds, or in the case $3$ if $T_{v_0, v_1, v_2}$ has minimal area among domains of 
triangular surfaces spanned on $v_0,v_1,v_2$.
A geodesic triangle between $v_0, v_1, v_2$ is called a \emph{minimal geodesic triangle} if boundary of a minimal 
surface is mapped onto it.

Systolic complexes are contractible. Therefore if $X$ is systolic then every map $f:S^n \to X$ can be extended to a map
$F:B^{n+1}\to X$. Similarly as for circles, $2$-dimensional spheres have fillings with special properties. Precisely,
we have the following result:

\begin{lem}(see Theorem $2.5$ in \cite{E1})\label{4.4}
Let $X$ be a systolic complex and let $S$ be a triangulation of a $2$-sphere. Then any simplicial map $f:S \to X$ can be 
extended to a simplicial map $F:B \to X$, where $B$ is a triangulation of a 3-ball such that $\partial B = S$ and
$B$ has no internal vertices.
\end{lem}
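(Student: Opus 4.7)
The plan is to induct on the number of vertices of the triangulated $2$-sphere $S$. The engine of the induction is the classical Euler-characteristic observation that every triangulation of $S^{2}$ contains a vertex of degree at most $5$: since $V-E+F=2$ and $2E=3F$ force the average degree to be $6-12/V<6$, at least one vertex has degree in $\{3,4,5\}$, which is precisely the range where the $6$-large condition on links in $X$ supplies the combinatorics needed to fill.

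The base case is when $S$ has exactly four vertices, so combinatorially $S=\partial\Delta^{3}$. The four images under $f$ are pairwise joined by edges of $X$ because $f$ is simplicial on each edge of $S$, so by flagness they span a $3$-simplex of $X$; taking $B=\Delta^{3}$ and letting $F$ be the obvious simplicial extension of $f$ gives a filling with no internal vertex.

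For the inductive step, pick $v\in S^{(0)}$ with $\deg_{S}(v)=n\leq 5$ and let $\gamma=lk_{S}(v)$, a simplicial cycle of length $n\in\{3,4,5\}$. Then $f|_{\gamma}$ lands in $lk(f(v),X)$, which is $6$-large. Assume first that $f|_{\gamma}$ is injective: if $n=3$, flagness yields a filling triangle; if $n=4$ or $5$, a diagonal of $f(\gamma)$ in $lk(f(v),X)$ exists by $6$-largeness and can be used recursively to cut $f(\gamma)$ into $n-2$ triangles. Lifting this triangulation to an abstract disc $D(\gamma)$ with $\partial D(\gamma)=\gamma$ and coning over $v$ produces a combinatorial $3$-ball $C_{v}=v\ast D(\gamma)$ with vertex set $\{v\}\cup\gamma^{(0)}\subset S^{(0)}$, no internal vertex, and a simplicial extension of $f$ whose value on each tetrahedron is a genuine $3$-simplex of $X$. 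Excising $St_{S}(v)$ from $S$ and gluing $D(\gamma)$ in its place yields a triangulated $2$-sphere $S'$ with one fewer vertex carrying an induced simplicial map to $X$; the inductive hypothesis applied to $S'$ gives a filling $B'$ without internal vertices, and $B:=B'\cup C_{v}$ is the sought-after filling of $S$.

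The main obstacle is the treatment of the cases where $f|_{\gamma}$ is not injective: two vertices of $\gamma$ may be identified by $f$, or consecutive edges of $\gamma$ may fold onto a single edge. Then $f(\gamma)$ is not a simple cycle and the diagonal-by-diagonal argument above breaks down; one must pre-triangulate $D(\gamma)$ by hand according to the coincidence pattern and verify that every triangle produced still has its image a simplex of $X$ (possibly degenerate). The key point rescuing the induction is that coincidences among $f$-values of vertices of $\gamma$ only enlarge the set of ``diagonals'' available in $lk(f(v),X)$, so the degenerate cases are in fact easier to fill than the injective ones. Once this case analysis is dispatched the inductive step closes, and the assembled ball $B$ has no interior vertex by construction.
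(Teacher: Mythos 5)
First, a remark on context: the paper does not actually prove this lemma --- it is quoted verbatim from \cite{E1} (Theorem $2.5$) --- so there is no internal proof to compare yours against, and your attempt must stand on its own. Judged that way, the induction has a genuine gap at the surgery step. After excising $St_S(v)$ you glue in a triangulated polygon $D(\gamma)$ whose diagonals are \emph{forced} by which edges happen to exist in $lk(f(v),X)$: $6$-largeness guarantees \emph{some} diagonal of $f(\gamma)$, not one of your choosing. But a diagonal $\{u_i,u_j\}$ of $\gamma=lk_S(v)$ may already be an edge of $S$ outside $St_S(v)$; for instance the $5$-vertex triangulation of $S^2$ with triangles $vu_1u_2$, $vu_2u_3$, $vu_3u_4$, $vu_4u_1$, $u_1u_2u_3$, $u_1u_3u_4$ has the diagonal $u_1u_3$ of $lk_S(v)$ present as an edge. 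If the only diagonal of $f(\gamma)$ available in $lk(f(v),X)$ is the image of such an edge, then $S'=(S\setminus St_S(v))\cup D(\gamma)$ has an edge contained in four triangles: it is not a triangulation of $S^2$, the inductive hypothesis does not apply, and the argument stops. Nothing in your write-up rules this configuration out or shows that a low-degree vertex admitting a ``safe'' retriangulation always exists; since the admissible diagonals are dictated by $X$ rather than by you, this is not a cosmetic point.

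Second, the assertion that $f|_{\gamma}$ lands in $lk(f(v),X)$ already fails when $f$ collapses an edge at $v$: if $f(u)=f(v)$ for some $u\in\gamma$ then $f(u)\notin lk(f(v),X)$. Your closing paragraph discusses identifications among the vertices of $\gamma$ but not this case, and in any event the degenerate cases are asserted, not proved, to be ``easier''; they in fact aggravate the surgery problem above, since identifications make it even easier for $S'$ to fail to be a $2$-manifold. To salvage the statement you would need either to prove that a suitable vertex and an admissible retriangulation can always be found simultaneously (handling duplicated diagonals, e.g.\ by splitting the sphere along the offending triangle), or to abandon the vertex-by-vertex reduction in favour of a global argument: extend $f$ to some simplicial filling of the $3$-ball using contractibility of $X$, pass to a filling minimizing the number of simplices, and show that minimality together with local $6$-largeness excludes internal vertices. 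As it stands, the inductive step does not close.
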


\section{$7$-systolic case}
We start with the proof of a useful lemma in slightly more general setting:
\begin{lem}\label{wlozenie}
 Let $X_1, X_2, X_3$ be three pairwise intersecting geodesically convex subcomplexes in a connected simplicial complex $X$. Then we 
 can pick a  triple of points $A,B,C$ such that $A \in X_1 \cap X_2$, $B \in X_1 \cap X_3$, $C \in X_2 \cap X_3$ and 
 geodesics  $\gamma_1 \subset X_1$, $\gamma_2 \subset X_2$, $\gamma_3 \subset X_3$ between them in such a way that 
 either a  geodesic triangle $\gamma_1 \ast \gamma_2 \ast \gamma_3$ is isomorphic to a triangulation of $S^1$ or
 $\gamma_1 \cap \gamma_2 \cap \gamma_3  \neq \emptyset$.
\end{lem}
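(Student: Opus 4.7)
The plan is to run a minimization argument on total length. Let $\mathcal{F}$ denote the set of all tuples $(A, B, C, \gamma_1, \gamma_2, \gamma_3)$ with vertices $A \in X_1 \cap X_2$, $B \in X_1 \cap X_3$, $C \in X_2 \cap X_3$ together with combinatorial geodesics $\gamma_1 \subset X_1$ from $A$ to $B$, $\gamma_2 \subset X_2$ from $A$ to $C$, and $\gamma_3 \subset X_3$ from $B$ to $C$. The pairwise intersection hypothesis, together with geodesic convexity of the $X_i$, makes $\mathcal{F}$ nonempty, and the total length $\ell := |\gamma_1| + |\gamma_2| + |\gamma_3|$ takes values in $\mathbb{N}$, so I fix a tuple minimizing $\ell$.

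The key claim is that at a minimizer the three geodesics either share a common vertex or meet pairwise only at the designated endpoints. Suppose neither occurs; then, after relabeling if needed, there is a vertex $D \in \gamma_1 \cap \gamma_2$ with $D \neq A$. If $D = B$, then $B \in \gamma_2$ while already being an endpoint of $\gamma_1$ and $\gamma_3$, so $B \in \gamma_1 \cap \gamma_2 \cap \gamma_3$ and we are in the common-point conclusion; the case $D = C$ is symmetric. Otherwise $D$ is an interior vertex of both $\gamma_1$ and $\gamma_2$, giving $D \in X_1 \cap X_2$. Replacing $A$ by $D$ and $\gamma_1, \gamma_2$ by their sub-geodesics from $D$ to $B$ and from $D$ to $C$ (still geodesics and still contained in $X_1, X_2$ since they are sub-paths of the original $\gamma_1, \gamma_2$), while keeping $\gamma_3$, produces a tuple in $\mathcal{F}$ with strictly smaller $\ell$, contradicting minimality.

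To finish: if the geodesics meet pairwise only at $A, B, C$, then their union is a simple closed combinatorial edge loop and so is isomorphic as a simplicial complex to a triangulation of $S^1$. I anticipate no real obstacles here; the only subtle point is the case split at the overlap vertex $D$, where interior overlap drives the length down but endpoint overlap is forced to feed directly into the common-intersection conclusion. The argument uses nothing beyond the standard facts that sub-geodesics of geodesics are geodesics and that geodesic convexity of a subcomplex preserves such sub-paths, which is why the lemma can be stated in the generality of an arbitrary connected simplicial complex rather than requiring $X$ to be systolic.
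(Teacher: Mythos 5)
Your proof is correct and takes essentially the same approach as the paper's: whenever two of the geodesics meet at a vertex other than their designated common corner, that vertex lies in the appropriate pairwise intersection $X_i \cap X_j$, so the corner can be replaced by it, strictly shortening the configuration. The only difference is presentational --- you organize the descent as an explicit minimization of $|\gamma_1|+|\gamma_2|+|\gamma_3|$ (which makes termination and the degenerate cases automatic), whereas the paper iterates the replacement procedure and leaves termination implicit.
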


\begin{proof}
 Let $A \in X_1 \cap X_2$, $B \in X_1 \cap X_3$ and $C \in X_2 \cap X_3$. 
 Since $X_1, X_2, X_3$ are convex, it follows that there are geodesics $\gamma_1 \subset X_1$, $\gamma_2 \subset X_2$, 
 $\gamma_3 \subset X_3$ connecting $A$ with $B$, $B$ with $C$ and $C$ with $A$ respectively.
 If either all of these geodesics have a nonempty intersection or they only intersect at endpoints, Lemma \ref{wlozenie} 
 is proved.
 
 \begin{figure}[ht!]
  \begin{center}
   \includegraphics[width=0.441\textwidth]{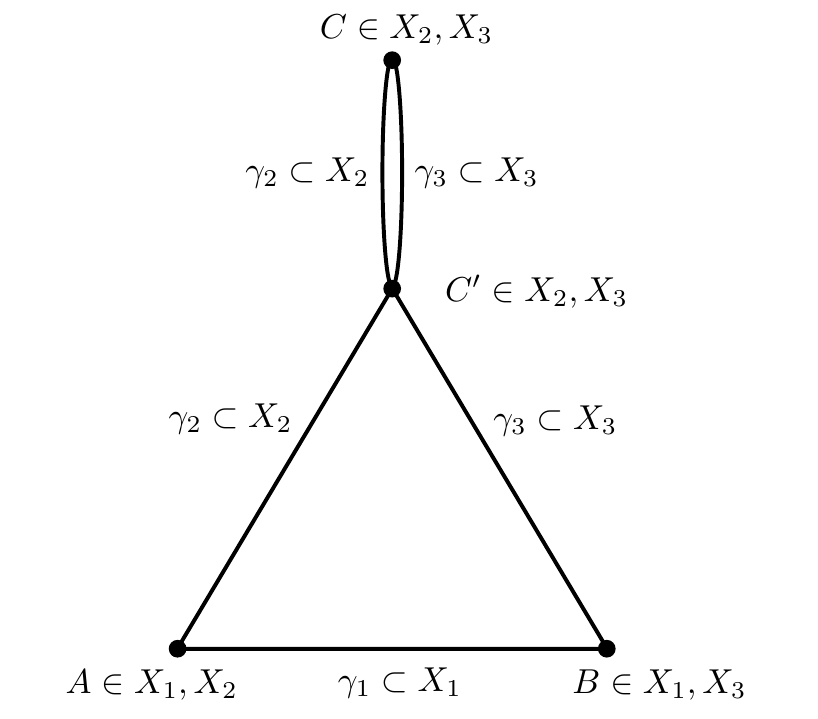}
   \caption{We can choose such $A,B,C$ that $\gamma_1 \ast \gamma_2 \ast \gamma_3 \simeq S^1$.}\label{7sys1}
  \end{center}
 \end{figure}
 
 Suppose now that $\gamma_1 \cap \gamma_2 \cap \gamma_3  = \emptyset$ and that some pair of these geodesics ,
 say $\gamma_2$ and $\gamma_3$, have a nonempty intersection besides endpoints. Let $C'$ be a vertex in this intersection.
 Since $C' \in \gamma_2 \cap \gamma_3 \subset X_2 \cap X_3$ we can replace point $C$ by the point $C'$ and
 obtain a new triple of points $A, B, C'$ with the same properties (see Figure \ref{7sys1} as an illustration).
 We repeat this procedure until $\gamma_1, \gamma_2, \gamma_3$ have no intersection except for the endpoints, hence until
 $\gamma_1 \ast \gamma_2 \ast \gamma_3$ is isomorphic to some triangulation of $S^1$.
 
\end{proof}

Now we can formulate and prove Helly's theorem for $7$-systolic complexes:

\begin{tw}\label{tw7}
Let $X$ be a $7$-systolic complex and let $X_1, X_2, X_3$ be pairwise intersecting convex subcomplexes. Then there
exists a simplex $\sigma \subseteq X$ such that $\sigma \cap X_i \neq \emptyset$ for $i = 1,2,3$.
Moreover, $\sigma$ can be chosen to have the dimension at most two.
\end{tw}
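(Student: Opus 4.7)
My strategy is to apply Corollary \ref{generalus} to anchor vertices chosen from the three pairwise intersections and exploit the fact that 7-systolicity forces the resulting triangular surface to have a very small equilateral part. I first pick $A \in X_1 \cap X_2$, $B \in X_1 \cap X_3$, $C \in X_2 \cap X_3$. If two anchors coincide (say $A = B$), then $A$ belongs to all three subcomplexes and $\sigma = \{A\}$ suffices. Otherwise I apply Corollary \ref{generalus} to the distinct vertices $A, B, C$. Geodesic convexity implies that every geodesic between two anchors lies in the $X_k$ containing them both, so $[A, B] \subset X_1$, $[B, C] \subset X_3$, $[C, A] \subset X_2$. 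In case (1) the anchor lying between the other two therefore belongs to all three $X_k$, and in case (2) the tripod vertex lies on geodesics in each of the three $X_k$; in either sub-case a single vertex serves as $\sigma$.

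The substantive case is (3), where $T_{A,B,C}$ is an equilaterally triangulated equilateral disc of some side $n$, possibly with up to three horns attached at its corners. Minimality of the triangular surface forces $T_{A,B,C}$ to be 7-systolic (the reasoning behind Lemma \ref{4.2} transfers to the triangular setting). An equilaterally triangulated equilateral triangle of side $n \geq 3$ has an interior vertex with $\chi = 6$, and Lemma \ref{GB} then fixes its defect at $0$, contradicting the requirement from 7-largeness that every interior defect be at most $-1$. Therefore $n \in \{1, 2\}$.

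For $n = 1$ the disc is a single 2-simplex with corners $x'_0, x'_1, x'_2$. A horn at $v_i$, if present, maps into the intersection of the two boundary geodesics meeting at $v_i$, which by geodesic convexity lies in the intersection of the two $X_k$'s that contain $v_i$. Consequently $S(x'_0) \in X_1 \cap X_2$, $S(x'_1) \in X_1 \cap X_3$, $S(x'_2) \in X_2 \cap X_3$ whether or not horns are present, and the image of the 2-simplex is the desired $\sigma$, with one vertex in each pairwise intersection. For $n = 2$ the disc contains the central inverted 2-simplex spanned by the three side-midpoints $m_0, m_1, m_2$; each $m_i$ maps into the corresponding boundary geodesic and hence into the corresponding $X_k$, so the image of the central simplex is $\sigma$. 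In both sub-cases $\dim \sigma \leq 2$.

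The main obstacle I foresee is the bound $n \leq 2$. It depends on two ingredients: that minimality of the triangular surface upgrades its domain to a 7-systolic complex, and that the rigid equilateral triangulation pins interior defects at $0$, which is incompatible with the 7-large condition via Lemma \ref{GB}. Once $n \leq 2$ is established, the remaining case analysis reduces to a direct combinatorial check exploiting the geodesic convexity of the $X_k$'s.
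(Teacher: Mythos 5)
Your proof is correct, and it takes a genuinely different route through the same circle of ideas. The paper first normalizes the configuration via Lemma \ref{wlozenie} to get an embedded cycle $\gamma_1\ast\gamma_2\ast\gamma_3$, then runs a from-scratch Gauss--Bonnet analysis on a minimal surface spanning it: it shows directly that every non-corner boundary vertex lies in at least three triangles (a one-triangle vertex would give a chord contradicting geodesicity, a two-triangle vertex would allow an area-reducing exchange), so that the defect budget of $6$ is exhausted by the three corners and the disc is flat with no interior vertices. You instead invoke Corollary \ref{generalus} (Elsner's structure theorem, which the paper states but does not use in this proof) to get the equilateral-disc-with-horns shape for free, and only need the single observation that an interior vertex of the equilateral triangulation has defect $0$ while $7$-systolicity of the minimal disc (Lemma \ref{4.2}) forces interior defects $\leq -1$; hence the side length is at most $2$. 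Your approach buys a shorter argument at the cost of leaning on the heavier imported Lemma \ref{4.3}; the paper's is self-contained modulo Gauss--Bonnet. Notably, your explicit treatment of the side-$2$ case via the central inverted simplex is a point where you are more careful than the paper: the paper's defect computation does not by itself rule out the side-$2$ equilateral triangle (which satisfies all the derived constraints --- no interior vertices, corners of defect $2$, midpoints of defect $0$), so its closing assertion that $\Delta_S$ is a single $2$-simplex skips exactly the case you handle; the theorem survives either way because, as you observe, the central simplex meets all three geodesics. One small inaccuracy: the fact that an interior vertex of the equilateral triangulation has defect $0$ is immediate from $\chi(v)=6$ and the definition of defect, not a consequence of Lemma \ref{GB}; Gauss--Bonnet plays no role in your side-length bound.
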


\begin{proof}
 Due to Lemma \ref{wlozenie} we can choose a triple of points $A \in X_1 \cap X_2$, $B \in X_1 \cap X_3$, 
 $C \in X_2 \cap X_3$ and geodesics $\gamma_1$, $\gamma_2$ and $\gamma_3$, between them such that
 $\gamma_1 \cap \gamma_2 \cap \gamma_3  \neq \emptyset$ or $\gamma_1 \ast \gamma_2 \ast \gamma_3$ is a cycle. 
 In the first case the theorem is proved. 

 Otherwise we choose these geodesics in such a way that $\gamma_1, \gamma_2, \gamma_3$ is a minimal geodesic triangle. 
 We know by Lemma \ref{4.2} that a minimal surface $S$ spanned by this cycle is $7$-systolic and
 since $ \chi( \Delta_S )=1$ by Lemma \ref{GB} we have:
 
 \begin{displaymath}
   \sum_{v\in{\mathrm{\Delta_S}}}def(v)=6.\ 
 \end{displaymath}
 
 Since $\Delta_S$ is $7$-systolic, the link of its every interior vertex is a cycle of length at least 7 and hence 
 $def(v) < 0$ for every vertex $v \in{\mathrm{int}}{\Delta_S}$.
 The preimages $A', B', C' \in \Delta_S$ by $S$ of $A, B$ and $C$ respectively belong to minimum one triangle, 
 so the total contribution to the left hand side of  the above equation from those points is at most $6$.
 
 Consider now a geodesic $\gamma_1=(v_0,v_1,\ldots ,v_l)$, where $l$ denotes its length, $v_0=A$, $v_l=B$ 
 and set $v_i'=S^{-1}(v_i)$. We will show that $(def(v'_i)) \leq 0$ for $i = 1 \ldots l-1$, i.e. that every 
 such vertex must be contained in at least three triangles.

 Indeed, if $v'_i$ belongs to one triangle, there is an edge connecting $v'_{i-1}$ and $v'_{i+1}$ and
 the same holds for $v_{i-1}$ and $v_{i+1}$. This contradicts the fact that $\gamma_i$ is a geodesic 
 (see Figure \ref{7sys2}). 
 
 Similarly, if $v'_i$ belongs to two triangles, let $w'$ be their common vertex different from $v'_i$.
 Set $w=S(w')$ and note that there is another geodesic 
 $\gamma_1^{w}=(v_0, \ldots v_{i-1}, w, v_{i+1},\ldots, v_l)$ connecting $A$ to $B$ (see Figure \ref{7sys2}).
 Note also that the geodesic triangle ($\gamma_1^{w},\gamma_2,\gamma_3$) has a filling with smaller area than 
 ($\gamma_1,\gamma_2,\gamma_3$) (a restriction of the map $S$ to the disc $\Delta_S$ with simplices 
 $(v'_{i-1}, v'_i, w')$ and $(v'_i,v'_{i+1}, w')$) removed. This contradicts the fact that 
($\gamma_1,\gamma_2,\gamma_3$) has a minimal area.  
 We repeat the same reasoning for geodesics $\gamma_2,\gamma_3$.
 
  \begin{figure}[ht!]
  \begin{center}
   \includegraphics[width=0.925\textwidth]{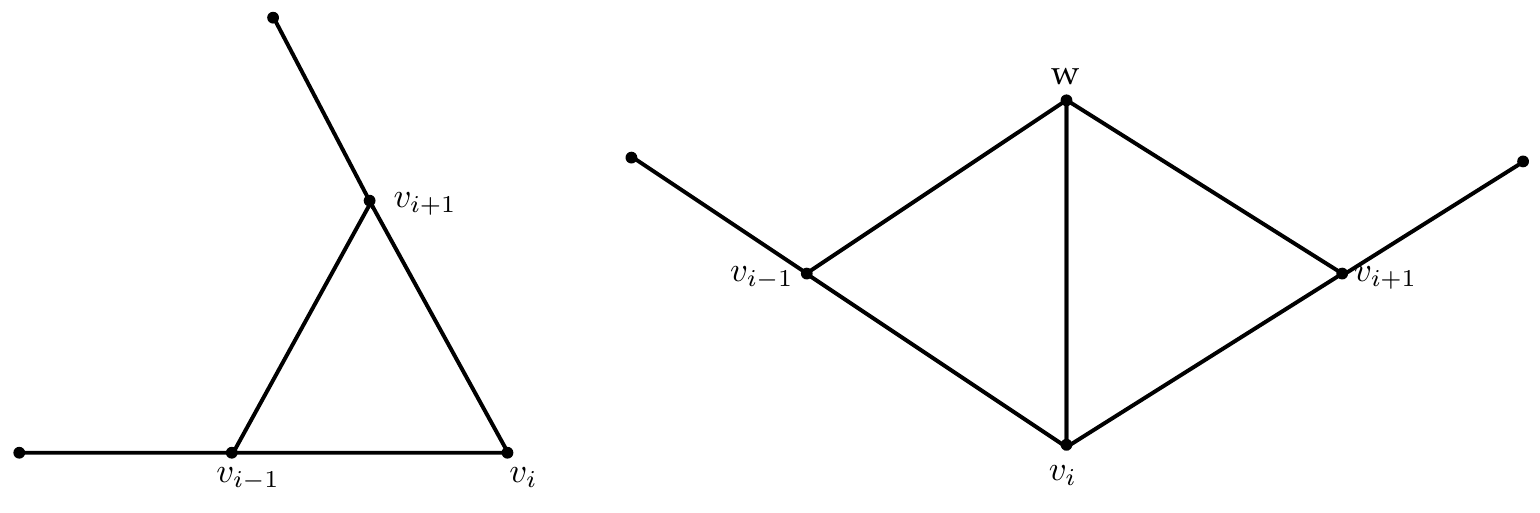}
   \caption{The vertex $v_i$ must be contained in at least three triangles.}\label{7sys2}
  \end{center}
 \end{figure}
 
 We conclude that if $\Delta_S$ is a domain of a minimal surface spanned by a minimal geodesic triangle, in order to 
 satisfy Gauss-Bonnet Lemma, it cannot contain any interior vertex. Vertices $A', B', C'$ have defect $2$ and the 
 rest of boundary vertices have defect $0$. Therefore due to Lemma \ref{flatkus} $\Delta_S$ is flat.
 It follows that $\Delta_S$ is a single $2$-simplex and its image under $S$ satisfies demanded properties.
 
\end{proof}

\begin{cor}
 Let $A,B,C$ be three distinguished points in a $7$-systolic complex $X$. Then a minimal geodesic triangle 
 between them consists of three geodesic segments and a $1$-skeleton of the one (possibly degenerated) 
 $2$-simplex 
(see Figure \ref{7sys3}).
 
 \begin{figure}[ht!]
  \begin{center}
   \includegraphics[width=0.3\textwidth]{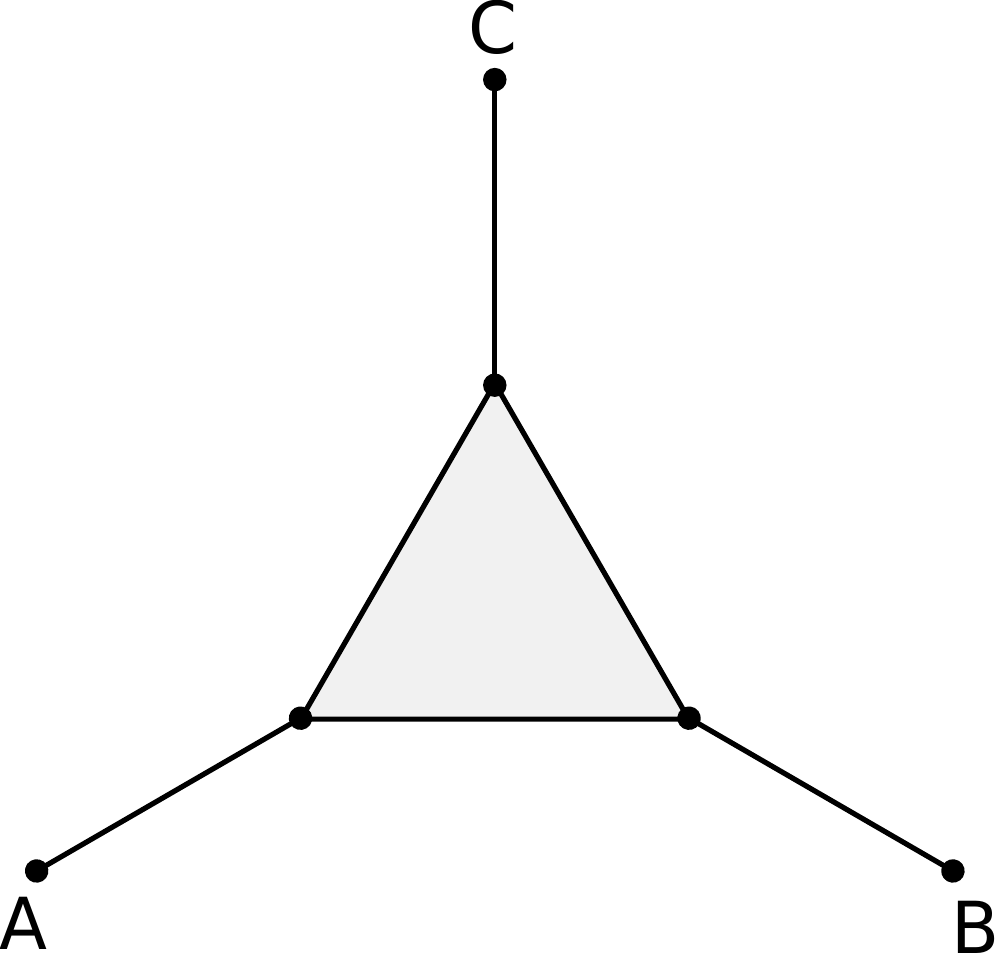}
   \caption{A shape of a minimal geodesic triangle in $7$-systolic complex.}\label{7sys3}
  \end{center}
 \end{figure}
\end{cor}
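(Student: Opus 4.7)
The plan is to invoke Corollary \ref{generalus} to the triple $A, B, C$, obtaining a minimal triangular surface $S_{A,B,C} : T_{A,B,C} \to X$. In cases (1) and (2) of that corollary the domain $T_{A,B,C}$ is already a segment or a tripod, so the minimal geodesic triangle degenerates to one of these shapes; these correspond precisely to the \emph{degenerated} $2$-simplex possibility allowed by the statement. Thus the only case requiring work is case (3), where $T_{A,B,C}$ is an equilaterally triangulated equilateral disc with at most three horns.

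In case (3) I would show that the disc part of $T_{A,B,C}$ is exactly a single $2$-simplex. The argument is essentially a repetition of the Gauss--Bonnet computation used in the proof of Theorem \ref{tw7}. By Lemma \ref{4.2} the disc is $7$-systolic, hence the link of every interior vertex is a cycle of length $\geq 7$, so $\mathrm{def}(v) < 0$ at every interior vertex. The three corners $A', B', C'$ (preimages under $S_{A,B,C}$ of $A, B, C$) contribute at most $2$ each to $\sum_v \mathrm{def}(v)$, for a total of at most $6$. For any boundary vertex $v'_i$ of the disc lying strictly between two corners, the argument in Theorem \ref{tw7} (showing that otherwise either the incident edge of the horn fails to extend a geodesic or the triangular surface admits a strictly smaller filling) applies verbatim, forcing $\mathrm{def}(v'_i) \leq 0$.

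Since $\chi(\Delta) = 1$, Lemma \ref{GB} gives $\sum_v \mathrm{def}(v) = 6$, and the inequalities above can only be saturated if there are no interior vertices, each corner contributes exactly $2$, and every other boundary vertex contributes $0$. Lemma \ref{flatkus} then certifies flatness of the disc, and a flat disc with exactly three boundary vertices of defect $2$ and no other non-zero defects must be a single $2$-simplex. The horns, being geodesic segments by construction of Corollary \ref{generalus}, attach at the three vertices of this $2$-simplex, so the minimal geodesic triangle is exactly three geodesic segments together with the $1$-skeleton of a single (possibly degenerated) $2$-simplex, as claimed.

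The main obstacle is really just making sure the local arguments of Theorem \ref{tw7} transfer cleanly to the horn-attachment points: one has to verify that a horn-attachment vertex is treated by the Gauss--Bonnet count exactly like a generic boundary vertex, so that the flatness criterion of Lemma \ref{flatkus} is not spoiled. This is straightforward because from the perspective of the disc part of $T_{A,B,C}$ the horn contributes nothing locally beyond a single boundary edge extension, so the defect count and the minimality-based exclusion of small-defect vertices both go through without change.
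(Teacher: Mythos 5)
Your proposal is correct and matches the paper's reasoning: the paper offers no separate proof for this corollary, treating it as an immediate consequence of Corollary \ref{generalus} together with the Gauss--Bonnet/minimality computation carried out in the proof of Theorem \ref{tw7}, which is exactly the argument you reconstruct. Your extra care about the horn-attachment vertices is a reasonable (and harmless) elaboration of the same approach rather than a different route.
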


\section{$6$-systolic case}

We start with proving some useful lemmas and introducing some terminology.

\begin{lem}\label{digon}
 Let $x, y$ be two distinct points in a systolic complex $X$ and let $\gamma_0$ and $\gamma_1$ be two geodesics between 
 them. If these geodesics intersect only at endpoints, then a minimal surface spanned by $\gamma_0 \ast \gamma_1$ is flat.
\end{lem}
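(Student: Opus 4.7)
The plan is to verify the three flatness conditions of Lemma \ref{flatkus} for the domain $\Delta_S$ of a minimal surface spanning $\gamma_0 \ast \gamma_1$. By Lemma \ref{4.2}, $\Delta_S$ is a (6-)systolic disc, and the combinatorial Gauss-Bonnet formula (Lemma \ref{GB}) gives
\[
\sum_{v\in\Delta_S} def(v)=6.
\]
Since $\Delta_S$ is $6$-systolic, the link of every interior vertex is a cycle of length at least $6$, so $def(v)\leq 0$ for every interior vertex. The endpoint vertices $x',y'$ on the boundary trivially satisfy $def\leq 2$.

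For a non-endpoint boundary vertex $v'_i$ on $\gamma_0$ (similarly on $\gamma_1$), if $\chi(v'_i)=1$ then the two boundary neighbors $v'_{i-1},v'_{i+1}$ sit in a common triangle, so their images $v_{i-1},v_{i+1}$ are joined by an edge in $X$ — contradicting the geodesic property of $\gamma_0$. Hence $def(v'_i)\leq 1$ for every non-endpoint boundary vertex. This mirrors the first half of the defect bounds used in the proof of Theorem \ref{tw7}.

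Next I would use minimality of $S$, together with the disjointness hypothesis $\gamma_0\cap\gamma_1=\{x,y\}$, to rule out interior vertices of strictly negative defect and boundary vertices of defect $\leq -2$. The strategy parallels the two-triangle reduction in the proof of Theorem \ref{tw7}: a strictly negative interior defect or a too-negative boundary defect corresponds to a local configuration in $\Delta_S$ that can be replaced by a sub-configuration with fewer triangles. Where Theorem \ref{tw7} modifies one side of a geodesic triangle between fixed vertices, here the reduction modifies one of the two sides of the bigon; the disjointness hypothesis guarantees that the modified geodesic $\gamma_0^{w}$ still meets $\gamma_1$ only at the endpoints, so iterating the reduction and taking the overall minimum-area surface over all such bigons produces the contradiction. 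Once interior defects are shown to be exactly $0$ and boundary defects at least $-1$, the separation condition (3) of Lemma \ref{flatkus} follows directly from Gauss-Bonnet combined with the above upper bounds.

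The main obstacle, compared with the $7$-systolic setting of Theorem \ref{tw7}, is the weaker lower bound on interior links: there $\chi(v)\geq 7$ gave $def(v)<0$ strictly for all interior vertices, forcing $\Delta_S$ to have no interior vertices at all and be a single $2$-simplex. Here, interior vertices of defect $0$ are genuinely possible (indeed they occur in flat strip fillings of long bigons), and the local reduction must precisely distinguish the \emph{flat} case $\chi(v)=6$ from the \emph{strictly negative} case $\chi(v)\geq 7$, using both minimality and the geodesic disjointness assumption, without conflating the two.
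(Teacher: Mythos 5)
Your setup (Gauss--Bonnet with total defect $6$, interior defects $\leq 0$ by systolicity, endpoint defects $\leq 2$, and the one-triangle argument giving $def(v'_i)\leq 1$ at non-endpoint boundary vertices) matches the paper's opening moves, but the proposal has a genuine gap where you defer the real work. The pointwise bound $def\leq 1$ on a geodesic side of length $\ell$ only bounds that side's total defect by $\ell-1$, so Gauss--Bonnet does not yet force interior defects to vanish. The ingredient you are missing is the stronger statement that on a boundary arc of a systolic disc mapping to a \emph{geodesic} of $X$, any two positive-defect vertices are separated by a negative-defect vertex (the paper cites the proof of Theorem $2.1$ in \cite{JS1} for this). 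That bounds each side's \emph{total} defect by $1$, and then the count $6\leq 2+2+1+1+\sum_{\mathrm{int}}def(v)$ forces every inequality to be an equality: all interior defects are $0$, the negative boundary defects are exactly $-1$ and alternate with the positive ones, and all three conditions of Lemma \ref{flatkus} drop out at once. In particular, negative interior defect is excluded \emph{globally} by this count, not locally.

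Your proposed substitute --- a local area-reducing move in the style of the two-triangle reduction from Theorem \ref{tw7} --- does not work here, for two reasons. First, there is no local reduction that removes an interior vertex of strictly negative defect from a minimal disc; minimality kills configurations of \emph{positive} curvature (a boundary vertex in one or two triangles), and minimal systolic discs with degree-$7$ interior vertices exist in general. Second, the two-triangle reduction in Theorem \ref{tw7} replaces a side $\gamma_1$ of the geodesic triangle by a new geodesic $\gamma_1^{w}$, which is legitimate there because the triangle is chosen minimal over all geodesic triangles between the three sets; in the present lemma $\gamma_0$ and $\gamma_1$ are \emph{fixed} data and the minimality is only over surfaces spanning the fixed cycle $\gamma_0\ast\gamma_1$, so you are not allowed to push a side inward. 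Indeed, if such a reduction were available it would eliminate every defect-$1$ boundary vertex and hence (by Gauss--Bonnet) every nondegenerate bigon, whereas flat bigons of positive area certainly occur. So the claimed iteration ``over all such bigons'' proves a different statement from the one asserted.
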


\begin{proof}
 Let $S:\Delta_S \to X$ be a minimal surface spanned by $\gamma_0 \ast \gamma_1$, which by Lemma \ref{4.2} is systolic.
 We need to check if $\Delta_S$ satisfies assumptions of Lemma \ref{flatkus}. 

 First let us consider the boundary vertices different from $S^{-1}(x)$ and $S^{-1}(y)$. 
 Since they lay on a geodesic, any two such positive vertices are separated by a negative vertex (see the proof of Theorem 
 $2.1$ in \cite{JS1}). Thus the total defect on each geodesic $\gamma_0$ and $\gamma_1$, excluding endpoints, is less or equal 
 to $1$. By Gauss-Bonnet Lemma we have:
 
 \begin{displaymath}
   \sum_{v\in{\mathrm{M}}}def(v)=6\chi(M) 
 \end{displaymath}
 
 Since $\Delta_S$ is systolic, the defect of every interior vertex is nonpositive. And since $\gamma_0$ and $\gamma_1$
 meet only at endpoints, vertices $S^{-1}(x)$ and $S^{-1}(y)$ have defect non greater then $2$. 
 In order to satisfy Gauss-Bonnet Lemma we see that every interior vertex has to have defect $0$ and on geodesics
 we have alternating vertices with defect $1$ and $-1$, possibly separated by vertices with defect $0$. 
 Therefore assumptions of Lemma \ref{flatkus} are satisfied and $\Delta_S$ is flat.
\end{proof}

A surface $S$ as in Lemma \ref{digon} is called a \emph{simple digon} spanned by $\gamma_0 \ast \gamma_1$.
Now we formulate, the following obvious consequence of Lemma \ref{digon}:

\begin{cor}
 Let $x, y$ be two distinct points in a systolic complex $X$ and let $\gamma_0$ and $\gamma_1$ be two geodesics
 connecting them. Then there exist a simplicial map $S_{x, y}^{\gamma_0, \gamma_1} : D_{x, y}^{\gamma_0, \gamma_1} 
 \to X$ satisfying one of the following conditions:
 \begin{enumerate}
  \item $D_{x, y}^{\gamma_0, \gamma_1}$ is a segment and $S_{x, y}^{\gamma_0, \gamma_1}$ is an isometry
  (this holds iff $\gamma_0 = \gamma_1$),
  \item $S_{x, y}^{\gamma_0, \gamma_1}$ is a simple digon,
  \item there is a sequence $D_1, D_2, \ldots D_n \subseteq D_{x, y}^{\gamma_0, \gamma_1}$ s.t. 
  $D_{x, y}^{\gamma_0, \gamma_1} = \bigcup_{i=1}^{n} D_i$, $D_i \cap D_j \neq \emptyset \iff \| i-j \| \leq 1$,
  $D_i \cap D_{i+1} = \{ v_i \} \in X^{(0)}$ and $ {S_{x, y}^{\gamma_0, \gamma_1}|}_{D_i}: D_i \to X$ 
  has a form as in $1$ or $2$.
 \end{enumerate}
\end{cor}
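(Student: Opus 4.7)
The plan is to reduce the corollary to repeated application of Lemma \ref{digon} by cutting the two geodesics at their common vertices. First I would observe that if $\gamma_0=\gamma_1$ we land in case (1) by taking the segment of length $d(x,y)$ mapped isometrically onto the common geodesic, and the condition is visibly an iff.

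Assuming $\gamma_0 \neq \gamma_1$, I would analyze the set $V = \gamma_0^{(0)} \cap \gamma_1^{(0)}$ of common vertices. The key observation is that for any $v \in V$, its position along each geodesic equals its combinatorial distance $d_X(x,v)$, since geodesics realize distances in the $1$-skeleton. Consequently the common vertices occur at the same positions on both geodesics, and listing them by distance from $x$ gives a sequence $x=v_0, v_1, \ldots, v_n=y$ that is common to $\gamma_0$ and $\gamma_1$. For each $i$, the sub-arcs $\gamma_0^i$ and $\gamma_1^i$ of $\gamma_0$ and $\gamma_1$ between $v_i$ and $v_{i+1}$ are themselves geodesics (any subpath of a geodesic is a geodesic), they have equal lengths, and by the maximality of the $v_j$'s they intersect only at their endpoints $v_i, v_{i+1}$.

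Now on each consecutive piece I apply the dichotomy already in hand: either $\gamma_0^i = \gamma_1^i$, in which case I set $D_i$ to be a segment of length $d(v_i,v_{i+1})$ mapped isometrically onto this common arc (as in case (1) of the corollary), or $\gamma_0^i \neq \gamma_1^i$, and then they are two geodesics between $v_i$ and $v_{i+1}$ meeting only at the endpoints, so Lemma \ref{digon} furnishes a simple digon $S_i:D_i\to X$ spanned by $\gamma_0^i \ast \gamma_1^i$. I then glue the domains $D_1,\ldots,D_n$ by identifying the endpoint $v_i$ of $D_i$ with the endpoint $v_i$ of $D_{i+1}$, obtaining $D_{x,y}^{\gamma_0,\gamma_1}$, and the maps $S_i$ assemble into a simplicial map $S_{x,y}^{\gamma_0,\gamma_1}$ since they agree (both equal to $v_i$) at the gluing vertices. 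If $n=1$ this is exactly case (1) or case (2); if $n>1$ this is case (3).

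The only part that needs genuine argument (rather than bookkeeping) is that the common vertices lie at matching positions on the two geodesics, ensuring that after cutting, the pairs of sub-arcs share endpoints in the correct way; everything else is immediate from Lemma \ref{digon} and the definition of a simplicial gluing. I do not anticipate a substantive obstacle here, since this is essentially a structural decomposition of a two-geodesic configuration into indecomposable digonal pieces.
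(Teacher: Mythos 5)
Your argument is correct and is exactly the reasoning the paper leaves implicit: the paper states this corollary without proof, calling it an ``obvious consequence'' of Lemma \ref{digon}, and your decomposition of the two geodesics at their common vertices (which, as you rightly note, must occur at matching positions since each geodesic realizes distance from $x$) followed by applying Lemma \ref{digon} to each indecomposable piece is the intended argument. No gaps; you have simply written out the bookkeeping the author omitted.
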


A map $S_{x, y}^{\gamma_0, \gamma_1}$ is called a \emph{digonal surface} spanned by $\gamma_0$, $\gamma_1$.
It is called \emph{minimal} if $D_{x, y}^{\gamma_0, \gamma_1}$ has minimal area among domains of digonal 
surfaces spanned by $\gamma_0$, $\gamma_1$.

\begin{lem}\label{conv}
 Let $S_{x,y,z}$ ($S_{x, y}^{\gamma_0, \gamma_1}$) be a minimal triangular (digonal) surface spanned
 on vertices $x, y, z$ (by $\gamma_0$, $\gamma_1$), which are contained in a convex subcomplex $A$ of a 
 systolic complex $X$. Then the image of the whole surface is contained in $A$.
% Let $A$ be a convex subcomplex of a systolic complex $X$ and let $S$ be either minimal surface spanned by a digon or
% regular triangle. If $S(\partial \Delta_S) \subseteq A$ then $S(\Delta_S) \subseteq A$.
\end{lem}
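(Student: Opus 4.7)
My plan is a minimality-based replacement argument, structured around the combinatorial classifications already established. I focus on the triangular case; the digonal case is entirely analogous and in fact easier, since by Lemma~\ref{digon} a simple digon is already flat. First, since $A$ is convex, every geodesic of $X$ between two vertices of $A$ lies in $A$, and $A$ is itself systolic, so Lemma~\ref{4.2} applies to cycles inside $A$. By Corollary~\ref{generalus} the domain $T_{x,y,z}$ of $S_{x,y,z}$ is either a segment, a tripod, or an equilaterally triangulated equilateral disc with at most three horns. In the first two cases the image consists entirely of geodesic segments joining $x,y,z$ and hence lies in $A$ by convexity. In the third case each horn maps isometrically onto a terminal sub-path of some $\gamma_i\subseteq A$ and hence lies in $A$, and the bottom vertices $x'_i$ lie on these geodesics as well. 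Thus the problem reduces to the central equilateral triangle $\Delta$, whose three boundary sides map onto geodesic sub-arcs of $\gamma_1,\gamma_2,\gamma_3$, all contained in $A$.

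For this reduced problem I would refine the minimality of $S$: among all $X$-fillings of $\partial\Delta$ of minimum area, choose one additionally minimizing the number of vertices $v'\in\Delta^{(0)}$ with $S(v')\notin A$. Suppose for contradiction some such ``bad'' vertex $v'$ exists. Let $W_0$ be the connected component containing $v'$ of the full subcomplex of $\Delta$ spanned by bad vertices. Because $\partial\Delta$ maps into $A$, $W_0$ sits in the interior of $\Delta$, and the vertices adjacent to $W_0$ but not in $W_0$ form a simple cycle $c\subset\Delta^{(1)}$ with $S(c)\subseteq A$. Let $\Delta_0\subseteq\Delta$ be the closed subdisc bounded by $c$ that contains $W_0$; by the global minimality of $S$ on $\Delta$, the restriction $S|_{\Delta_0}$ is itself a minimum-area $X$-filling of $S(c)$. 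Since $A$ is systolic, applying Lemma~\ref{4.2} inside $A$ produces a systolic filling $F:\Delta_F\to A$ of the cycle $S(c)$; replacing $\Delta_0$ by $\Delta_F$ glued along $c$ yields a new $X$-filling $S':\Delta'\to X$ of $\partial\Delta$ with strictly fewer bad vertices than $S$.

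The key obstacle is to ensure that this replacement does not increase area, i.e.\ that $F$ can be chosen with $\mathrm{area}(\Delta_F)\leq\mathrm{area}(\Delta_0)$. This amounts to the assertion that for a cycle contained in a convex subcomplex, the minimum filling area is the same whether computed inside $A$ or inside $X$. I expect this to follow from the existence of a simplicial nearest-point retraction $\pi_A:X\to A$, a standard feature of convex subcomplexes of systolic complexes: any $X$-filling postcomposed with $\pi_A$ gives an $A$-filling of the same boundary and no larger area (simplicial maps only collapse triangles). Granting this, $S'$ has area equal to that of $S$ but strictly fewer bad vertices, contradicting the refined minimality of $S$ and completing the proof. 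The digonal case follows by the same scheme, using the corollary after Lemma~\ref{digon} in place of Corollary~\ref{generalus} in the structural reduction and observing that the pinch vertices $v_i$ lie on both geodesics $\gamma_0,\gamma_1$ and hence in $A$.
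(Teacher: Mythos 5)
Your structural reduction (segment/tripod/horns all land in $A$ by geodesic convexity, so only the central equilateral disc matters) is fine and matches the spirit of the paper. The problem is the step you yourself flag as the key obstacle: you need the subdisc $\Delta_0$ carrying the bad vertices to be replaceable by an $A$-filling of the cycle $S(c)$ of no larger area, and you propose to get this from a ``simplicial nearest-point retraction $\pi_A:X\to A$''. No such retraction exists in general for convex subcomplexes of systolic complexes, and it is not a citable standard fact. In the systolic setting the projection of a vertex at distance $1$ from a convex subcomplex $A$ is a \emph{simplex} of $A$ (this is the content of the projection lemma in \cite{JS1}), not a vertex, and convex subcomplexes of bridged graphs are not gated, so there is in general no consistent choice of a single nearest vertex that yields a simplicial (or even $1$-Lipschitz vertex-level) retraction. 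This is unlike the $CAT(0)$ cube complex case you are implicitly importing. Without the retraction, the assertion that the minimal filling area of a cycle $\gamma\subseteq A$ is the same computed in $A$ as in $X$ is an unproved claim of essentially the same difficulty as the lemma itself, so the argument does not close. (A secondary issue: even granting the retraction, your refined-minimality argument only produces \emph{some} minimal filling with image in $A$, whereas the lemma asserts the conclusion for the given minimal surface; and the claim that the frontier of $W_0$ is a single simple cycle needs care.)

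The paper avoids all of this by exploiting the explicit flat geometry of the minimal surfaces rather than a generic cut-and-replace argument. For the digonal case it uses Lemma \ref{digon}: a simple digon is flat, hence each geodesic side carries a defect-$1$ boundary vertex $v_i$; the opposite vertex $w$ of its two triangles satisfies that $(S(v_{i-1}),S(w),S(v_{i+1}))$ is a length-$2$ geodesic between points of $A$, so $S(w)\in A$ by $3$-convexity, and one peels off $v_i$ and inducts on area. For the triangular case it peels off an entire boundary row of the equilateral triangle: Theorem B of \cite{E1} makes $S$ a local isometric embedding on discs of diameter $\le 3$ away from the corners, so each vertex of the adjacent row is the midpoint of a length-$2$ geodesic between points already known to lie in $A$, and convexity again forces it into $A$. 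If you want to salvage your approach you would have to prove the area-comparison statement directly; the paper's row-peeling argument is the more economical route.
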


\begin{proof}
 First Consider $S:\Delta_S \to X$ being a simple digon spanned by geodesics $\gamma_0, \gamma_1$, which connect 
 $x, y$. Since $x, y \in A$ and $A$ is convex it follows that also $\gamma_0, \gamma_1 \subseteq A$.
 We use the induction over the area of $\Delta_S$. 
 The statement is true for two points in distance $1$ or $2$. 

 We proceed with the induction step.
 From the proof of Lemma \ref{digon} it follows that at least one of the vertices 
 on each geodesic part of $\partial \Delta_S$ has defect $1$ (see Figure \ref{6sysflat}). Denote this vertex $v_i$ and
 let $v_{i-1}$ and $v_{i+1}$ be its neighbours on this geodesic.
 Let $w$ be a second common vertex of two triangles containing $v_i$.
 %Note that if $v_i$ is adjacent to one of the endpoints of the geodesic it follows that $S(w) \in \gamma_0$ or  $S(w) \in\gamma_1$,
 %and so $S(w) \in A$.
 %Otherwise $(S(v_{i-1}),S(w),S(v_{i+1}))$ is a geodesic by Theorem B in \cite{E1} applied to $\Delta_S$ with removed 
 %$2$-simplices containing preimages by $S$ of veritec $x, y$. 
 Note that as $S(v_{i-1})$ and $S(v_{i+1})$ lie on a geodesic $d(S(v_{i-1}), S(v_{i+1})) = 2$ and thus 
 $(S(v_{i-1}),S(w),S(v_{i+1}))$ is a geodesic.
 %From convexity of $A$ it follows that the vertex $S(w)$ also belongs to $A$.
 We can replace a geodesic from $x$ to $y$ passing through $v_i$ by the one passing through $w$.
 If $w$ does not belong to the other geodesic of $\partial \Delta_S$ we obtain a simple digon with smaller area and we can 
 use the inductive assumption.
 If $w$ belongs to the other geodesic of $\partial \Delta_S$ we consider two simple digons: one between $x$ and $w$, second 
 between $w$ and $y$. They both have smaller area than original digon hence we can 
 use the inductive assumption to conclude the statement in this case.
 
 \begin{figure}[ht!]
  \begin{center}
   \includegraphics[width=0.5\textwidth]{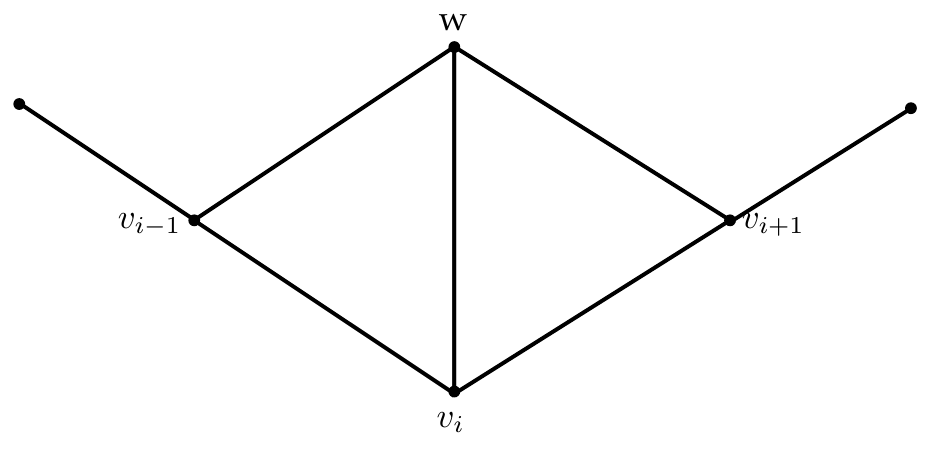}
   \caption{A vertex $v_i$ with  defect 1.}\label{6sysflat}
  \end{center}
 \end{figure}

 Now consider $S:\Delta_S \to X$ being a triangular surface spanned on $x, y, z$. 
 We use the induction over the length of a side of an equilateral triangle in
 $\Delta_S$. The statement is obvious for an equilateral triangle with side of length $1$. 

 We proceed with the induction step.
 Choose one side of the equilateral triangle in a $\Delta_S$ and denote its vertices by $v_{0}, v_1, \ldots v_l$. 
 Pick a maximal geodesic $(v',w_1,w_2, \ldots w_{l-2}, v'')$ in $\Delta_S$ consisting of vertices in distance $1$ from the side 
 $(v_{0},\ldots v_l)$ (see Figure \ref{6katwyp}).
 Notice that $S(v')$ and $S(v_i)$ for $i \in \{1 \ldots l\}$ belong to $A$. 
 Consider the image by $S$ of the path $(v',w_1,v_2)$. By Theorem  B in \cite{E1} applied to
 $\Delta_S$ with removed $2$-simplices containing preimages by $S$ of vertices $x, y, z$
 the restriction of $S$ to any simplicial disc contained in it with a diameter less or equal to three is an isometric embedding.
 Hence the path $(S(v'),S(w_1),S(v_2))$ is a geodesic in $X$.
 From convexity of $A$ it follows that also $S(w_1)$ belongs to $A$. 
 Repeating this argument we deduce that the image by $S$ of the entire geodesic $(v',w_1,w_2, \ldots w_{l-2}, v'')$ 
 is contained in $A$. It allows us to replace $\Delta_S$ with a smaller equilateral triangle and obtain the statement
 from the inductive assumption.
 
  \begin{figure}[ht!]
  \begin{center}
   \includegraphics[width=0.65\textwidth]{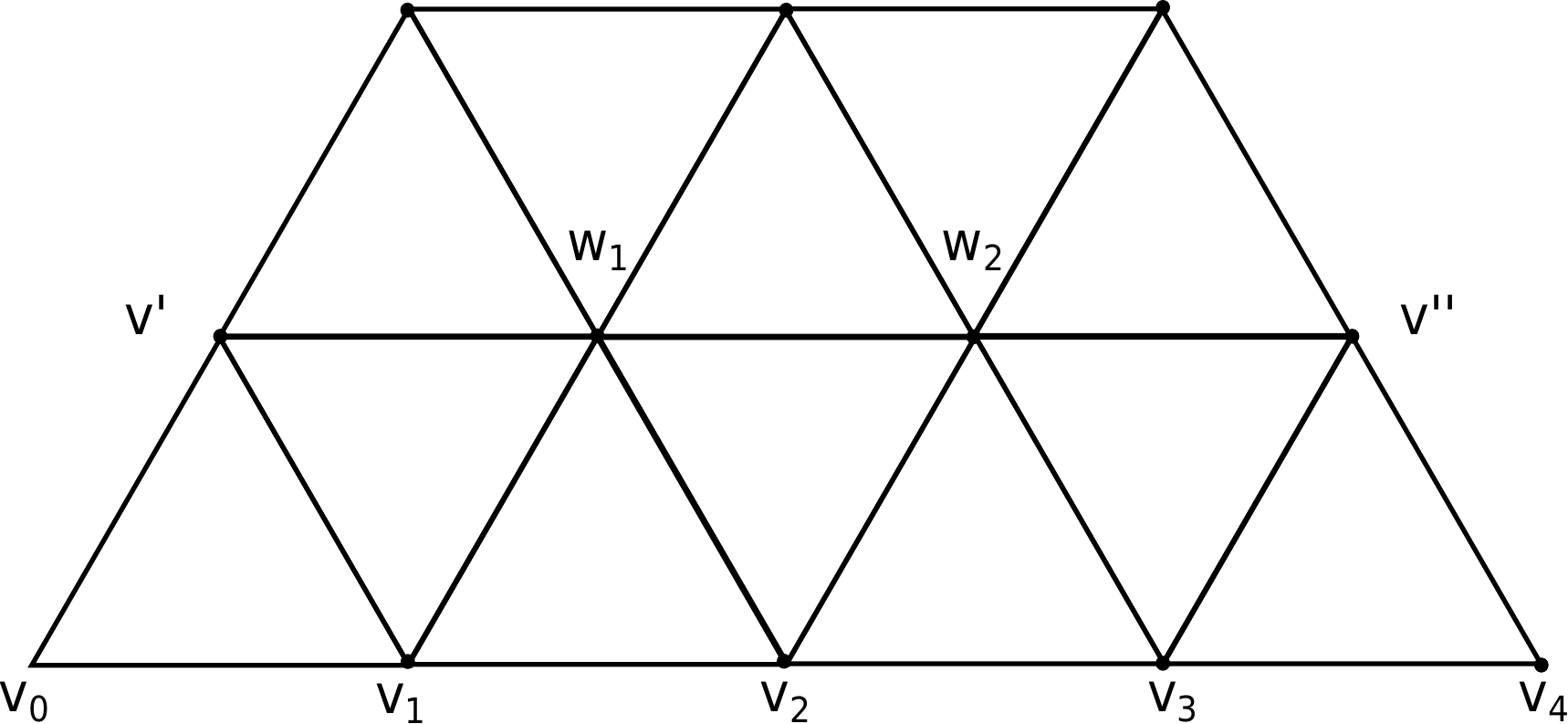}
   \caption{$S(w_1)$ belongs to $A$ from its $3$-convexity.}\label{6katwyp}
  \end{center}
 \end{figure}
 
\end{proof}

Now we construct a simplicial map that will be crucial in the proof of Helly's theorem.

\begin{lem}\label{spherus}
 Let $A, B, C, D$ be four distinct points in a systolic complex $X$. Then there exist:
 \begin{enumerate}
 \item a map $f_{A,B,C,D}: S \to X$, where $S$ is a triangulation of the $2$-sphere,
 \item an inclusion $i_{u,v,w}: T_{u,v,w} \to S$ of a domain of a minimal triangular surface for any 
 triple of different points $u, v, w \in \{A, B, C, D\}$,
 %\item an inclusion $i_{u,v}^{\gamma_0,\gamma_1}:D_{u,w}^{\gamma_0,\gamma_1} \to S $ of a domain of a digonal surface
 %for every pair of different points $u, v \in \{A, B, C, D\}$ connected by geodesics $\gamma_0, \gamma_1$ given by 
 %$T_{u,v,w}, T_{u,v,w'}$, where $w, w' \in \{A, B, C, D\} - \{u, v\}$.
 \item an inclusion $i_{u,v}: D_{u,w} \to S $ of a domain of some specified digonal surface %see the proof for details?
 for every pair of different points $u, v \in \{A, B, C, D\}$.
 \end{enumerate}
 These maps satisfy the following conditions:
 \begin{enumerate}
 \item $f_{A,B,C,D} \circ i_{u,v,w} = S_{u,v,w}$,
 \item $f_{A,B,C,D} \circ i_{u,v}= S_{u,v}$.
 %\item $F_{ABCD} \circle i_{A,B,C} \circ S^{-1}_{A,B,C} (\gamma_0)= F_{ABCD} \circl i_{A,B} \circ (S_{A,B}^{\gamma_0,\gamma_1})^{-1} (\gamma_0)$.
 \end{enumerate}
  Moreover, images of digonal and triangular surfaces after taking the union form entire sphere $S$,
  and are disjoint except the preimages by $f_{A,B,C,D}$ of geodesics defining them.
\end{lem}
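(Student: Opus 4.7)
The plan is to assemble $S$ as a quotient of the disjoint union of four triangular surface domains and six digonal surface domains, arranged so that the coarse combinatorial pattern matches the boundary of a tetrahedron on the vertex set $\{A,B,C,D\}$ with each edge inflated into a bigonal region. For each triple $\{u,v,w\}\subset\{A,B,C,D\}$ I invoke Corollary \ref{generalus} to pick a minimal triangular surface $S_{u,v,w}\colon T_{u,v,w}\to X$; this fixes a specific geodesic in $X$ along each side of $T_{u,v,w}$. Then for each pair $\{u,v\}$, writing $\{w_1,w_2\}=\{A,B,C,D\}\setminus\{u,v\}$, the two triangles $T_{u,v,w_1}$ and $T_{u,v,w_2}$ supply two (possibly equal) geodesics $\gamma^1_{u,v},\gamma^2_{u,v}$ from $u$ to $v$; I apply the corollary following Lemma \ref{digon} to produce a minimal digonal surface $S_{u,v}\colon D_{u,v}\to X$ whose boundary is $\gamma^1_{u,v}\cup\gamma^2_{u,v}$, and declare this to be the specified digonal surface of the statement.

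I define $S$ as the quotient of $\bigsqcup_{\{u,v,w\}}T_{u,v,w}\sqcup\bigsqcup_{\{u,v\}}D_{u,v}$ in which, for each pair $\{u,v\}$ and each $i\in\{1,2\}$, the side $\gamma^i_{u,v}$ of $D_{u,v}$ is identified with the $(u,v)$-side of $T_{u,v,w_i}$ via the simplicial isomorphism induced by their common image geodesic in $X$. The inclusions $i_{u,v,w}$ and $i_{u,v}$ are the canonical maps of the individual pieces into the quotient, and $f_{A,B,C,D}\colon S\to X$ is defined piecewise by the selected surfaces. By construction $f_{A,B,C,D}$ is well-defined and simplicial, the compositional identities $f_{A,B,C,D}\circ i_{u,v,w}=S_{u,v,w}$ and $f_{A,B,C,D}\circ i_{u,v}=S_{u,v}$ hold, and the images of the triangular and digonal surfaces tile $S$ with overlaps only along the shared geodesics, as required.

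It remains to verify that $S$ is a triangulation of the $2$-sphere. The coarse cellular structure has $4$ vertex $0$-cells (the corners $A,B,C,D$), $12$ geodesic $1$-cells (the two boundary arcs of each of the six bigons, which get identified with the twelve sides of the four triangles in pairs), and $10$ $2$-cells (four triangles and six bigons), giving Euler characteristic $4-12+10=2$. A direct check at each corner $u$ shows that the three triangles and three bigons meeting at $u$ are arranged in a $6$-cycle around $u$, so the link in $S$ is a cycle and $u$ has a disc neighbourhood; analogous local checks succeed at all other vertices, and hence $S$ is a closed surface of Euler characteristic $2$, i.e.\ a $2$-sphere. The main obstacle will be the degenerate cases of Corollary \ref{generalus} and its digonal analogue, namely a triangular domain that is a segment or tripod, or a digonal domain that reduces to a segment or to a chain of simple digons joined at vertices. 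In such configurations certain cells of $S$ collapse or pinch, and one must check case by case that after the natural simplicial collapses the resulting complex is still a simplicial sphere tiled by the chosen pieces in the prescribed way.
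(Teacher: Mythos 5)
Your construction is essentially the paper's: both assemble $S$ by gluing the four minimal triangular domains and the six digonal domains along their shared boundary geodesics in the pattern of a tetrahedron boundary with each edge doubled into a bigon, and your explicit Euler-characteristic and link verification only makes precise what the paper leaves implicit in ``it follows from the construction.'' The one divergence is in the degenerate cases you flag at the end: where you propose to let cells collapse and then re-check sphericity case by case, the paper instead inflates each degenerate digonal domain to a genuine triangulated disc (Figure \ref{didisc}) before forming the quotient, so that $S$ is a triangulated sphere without further analysis --- either route closes the loose end you identify.
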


\begin{proof}
 Consider any tree of these points, say $A, B$ and $C$. Due to Corollary \ref{generalus} there is a minimal triangular
 surface $S_{A,B,C}:T_{A,B,C}\to X$ spanned on these points. Denote by $\gamma_{AB}^{ABC}$ the geodesic between
 $A$ and $B$ from the minimal geodesic triangle associated to $S_{A,B,C}$. Similarly set  $\gamma_{AC}^{ABC}$ and 
 $\gamma_{BC}^{ABC}$. Similar minimal triangular surfaces and minimal geodesic triangles exist for
 every triple of points $A, B, C, D$.
 Denote by $A^{ABC}$ the last common vertex of $\gamma_{AB}^{ABC}$ and $\gamma_{AC}^{ABC}$, and similarly
 set $B^{ABC}$ and $C^{ABC}$. We use the similar notation for other triples.
 The general situation is presented in Figure \ref{6sysgen}, however some vertices can be identified.
 For example, geodesics $\gamma_{AB}^{ABD}, \gamma_{AB}^{ABD}$ can have more common vertices than endpoints
 (the same is true for any other pair of geodesics connecting the same pair of points), or
 geodesic segments $(A,A^{ABC})$ and triangles $(A^{ABC},B^{ABC}, C^{ABC})$ can be degenerated.
 All possible cases will be discussed later.
 Note that each point $A, B, C, D$ is contained in three convex sets (from the assumption), every geodesic in two convex 
 sets (from the convexity) and every triangular surface in one convex set (from Lemma \ref{conv}).
 
 \begin{figure}[ht!]
  \begin{center}
   \includegraphics[width=0.73\textwidth]{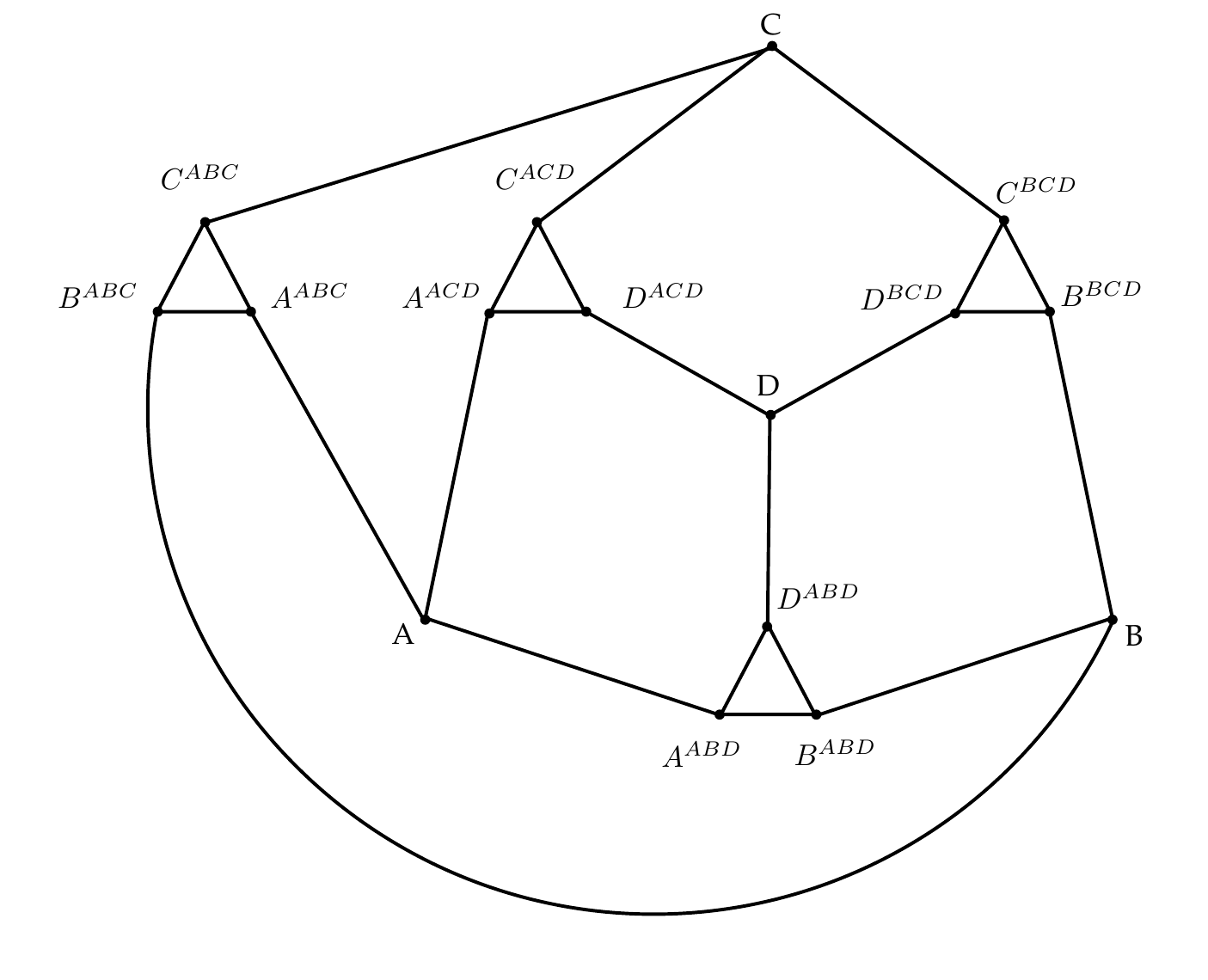}
   \caption{The general case.}\label{6sysgen}
  \end{center}
 \end{figure}
 
 Note that for two vertices $A, B$ there exist two (not necessary equal) geodesics $\gamma_{AB}^{ABC}$ and 
 $\gamma_{AB}^{ABD}$ connecting them. This defines the desired digonal surface 
 $S_{A, B}^{\gamma_{AB}^{ABD}, \gamma_{AB}^{ABD}}: D_{A,B}^{\gamma_{AB}^{ABD}, \gamma_{AB}^{ABD}} \to X$.
 For simplicity we denote this digonal surface by $S_{A, B}: D_{A,B} \to X$.
 We now modify the map $S_{A, B}$ to obtain a simplicial map, which has a triangulated disc as a domain.
 We do this similarly to the case presented in Figure \ref{didisc} and denote this map by
 $S'_{A, B}: D'_{A,B} \to X$.
 We repeat this construction for every pair of points.

 \begin{figure}[ht!]
  \begin{center}
   \includegraphics[width=0.2\textwidth,angle=270]{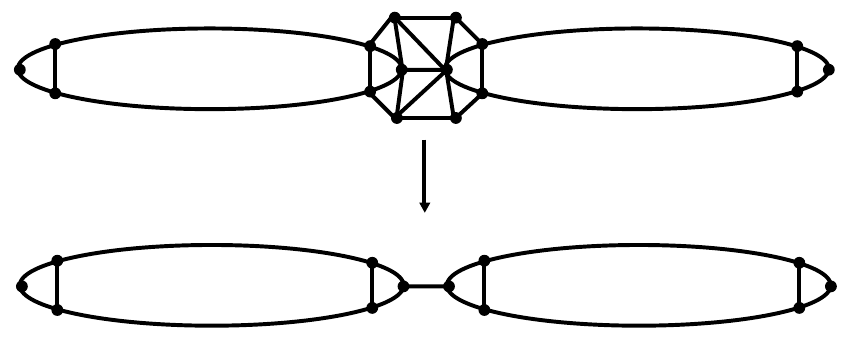}
   \caption{Extending $D_{A,B}^{\gamma_{AB}^{ABD}, \gamma_{AB}^{ABD}}$ to a disc.}\label{didisc}
  \end{center}
 \end{figure}

 Let $\tilde{S}$ be a disjoint union of $T_{u,v,w}$ and $D'_{u,v}$ and
 let $\tilde{f}_{A,B,C,D}: \tilde{S} \to X$ be defined as a respective digonal or triangular surface on each part of $\tilde{S}$.
 Define $S$ to be a quotient space $\tilde{S} /_{\sim}$, where $\sim$ is an equivalence relation identifying two boundary points
 of $T_{u,v,w}$ and $D'_{u,v}$ if they have the same image through $\tilde{f}_{A,B,C,D}$.
 %Since $S$ is obtained by gluing 12 discs and 4 discs with at most 3 horns along boundaries, it is a $2$-sphere.
 We define $f_{A,B,C,D}$ to be a quotient of $\tilde{f}_{A,B,C,D}$. It follows from the construction that $f_{A,B,C,D}$ and $S$ satisfy
 demanded properties.
 
\end{proof}

We now prove the second main theorem of this paper:

\begin{tw}\label{tw6}
Let $X$ be a systolic complex and let $X_1, X_2, X_3, X_4$ be its convex subcomplexes such that every three of
them have a nonempty intersection. Then there exists a simplex $\sigma \subseteq X$ such that 
$\sigma \cap X_i \neq \emptyset$ for $i = 1,2,3,4$.
Moreover, $\sigma$ can be chosen to have the dimension at most three.
\end{tw}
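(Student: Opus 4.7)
The plan adapts the approach of Theorem A one dimension higher, using the $3$-ball filling (Lemma \ref{4.4}) in place of Lemma \ref{4.2} and the sphere construction of Lemma \ref{spherus} in place of Lemma \ref{wlozenie}. First, by the triple-intersection hypothesis, pick reference points $A\in X_2\cap X_3\cap X_4$, $B\in X_1\cap X_3\cap X_4$, $C\in X_1\cap X_2\cap X_4$, $D\in X_1\cap X_2\cap X_3$ and apply Lemma \ref{spherus} to obtain a simplicial map $f=f_{A,B,C,D}\colon S\to X$ from a triangulated $2$-sphere $S$ assembled from four triangular surfaces (one per triple) and six digonal surfaces. Apply Lemma \ref{4.4} to extend $f$ to a simplicial map $F\colon B\to X$, where $B$ is a triangulated $3$-ball with $\partial B=S$ and no interior vertices; every vertex of $B$ then lies on $S$ and $F$ agrees with $f$ on vertices.

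Next, for each $i$ the three reference points different from $p_i$ all lie in the convex set $X_i$, so Lemma \ref{conv} forces the triangular surface spanned on those three points, together with the three digonal surfaces on their pairs, to map into $X_i$. Consequently $U_i:=f^{-1}(X_i)\cap S$ is a closed disk-like subcomplex of $S$ containing three of the four reference points, while its complement $W_i$ is a (possibly empty) open "cap" around $p_i$. The goal now reduces to finding a $3$-simplex $\tau$ of $B$ whose vertex set meets every $U_i$: the image $\sigma:=F(\tau)$ is then a simplex of dimension at most $3$, and since each of its vertices is the $f$-image of some vertex of $\tau$ lying in some $U_i$, it satisfies $\sigma\cap X_i\neq\emptyset$ for every $i$.

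To produce such a $\tau$ I would argue by contradiction. If every $3$-simplex of $B$ were \emph{bad} — meaning its four vertices all sit in some single cap $W_i$ — then labelling each bad simplex with such an index $i$ gives a labelling of the dual graph of $B$ that is locally constant across shared $2$-faces, since two adjacent $3$-simplices share three vertices while distinct caps cannot simultaneously contain three given common vertices in the generic disjoint-cap case. Connectedness of the dual graph of $B$ would then trap the whole ball inside one cap $W_i$, contradicting the fact that the three reference points $p_j$ with $j\neq i$ lie in $U_i=S\setminus W_i$. The main obstacle is that the caps may genuinely overlap — a vertex of $S$ can sit in two or more $W_i$'s — so the locally-constant-labelling step has to be refined into a Sperner-type colouring that exploits the combinatorial structure of $S$ as an "inflated tetrahedron" with four triangular faces, six digonal edges and four corners, together with the freedom to relabel a vertex lying in several caps with any of its cap indices.
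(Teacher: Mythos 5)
Your setup coincides with the paper's: choose one reference point in each triple intersection, build the sphere $S$ via Lemma \ref{spherus}, fill it with a $3$-ball with no interior vertices via Lemma \ref{4.4}, and use Lemma \ref{conv} to locate the triangular and digonal pieces inside the appropriate $X_i$'s. The reduction to finding a $3$-simplex of the ball whose vertices meet all four sets $f^{-1}(X_i)$ is also the right target. But the decisive combinatorial step --- actually producing that simplex --- is where your argument stops. Your dual-graph labelling is, as you yourself note, invalid whenever the caps $W_i$ overlap, and they do overlap in essentially every configuration (a vertex of $S$ lying in the region of face $k$ need not lie in $X_i$ or $X_j$ for $i,j\neq k$, so it sits in $W_i\cap W_j$); the promised ``Sperner-type refinement'' is named but never carried out. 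That refinement is not a routine patch: to run a Sperner count on the ball $B$ one must choose a colouring $c(v)\in\{1,2,3,4\}$ with $f(v)\in X_{c(v)}$ and then prove that the induced simplicial map $S\to\partial\Delta^3$ has odd degree, which requires both an argument that the colouring approximates the degree-one collapse of the ``inflated tetrahedron'' onto $\partial\Delta^3$ and a separate treatment of the degenerate configurations (digons collapsing to segments, triangular surfaces degenerating to tripods or segments, identifications among the corner points). None of this is present, and it is the entire content of the theorem.

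The paper resolves this step by a quite different mechanism that your proposal never invokes: the four points $A,B,C,D$ are chosen to minimize the \emph{sum of their pairwise distances}, and this minimality is used repeatedly --- first to force the geodesic segments $(D,D^{ABD})$, $(D,D^{ACD})$, $(D,D^{BCD})$ to degenerate, then to control the possible identifications among the six link vertices $v^{\ast}_{\ast}$ (Claim \ref{link}), and finally to derive contradictions in the case analysis. The existence of the desired simplex then comes from picking the $3$-simplex of $\Delta$ containing $D'$, $v'^{ABD}_A$, $v'^{ABD}_B$ and analysing where its fourth vertex $w'$ can sit on $S$ (boundary or interior of a digon, a triangular surface through $D$, or the far pieces $T_{A,B,C}$ and $D_{v,u}$), with only the last case producing the simplex and all others contradicting minimality of distances or of surface areas. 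Without the minimality hypothesis and the link analysis, or alternatively without a completed degree/Sperner argument, your proof does not go through; if you do want to pursue the Sperner route, the odd-degree claim for the labelling map (including degenerate spheres) is the statement you must prove, and it would constitute a genuinely different proof from the paper's.
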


\begin{proof}
 From the assumption there exist four points $A,B,C$ and $D$ such that $A \in X_1 \cap X_2 \cap X_3$,
 $B \in X_1 \cap X_2 \cap X_4$, $C \in X_1 \cap X_3 \cap X_4$, $D \in X_2 \cap X_3 \cap X_4$. 
 We choose these points in such a way that the sum of distances between them is minimal.
 First note that if any of these points lies on some geodesic connecting two other, then
 it is contained in all convex subcomplexes $X_1, X_2, X_3, X_4$ and therefore satisfies desired properties.
 Thus we can assume the opposite.

 Now we examine a neighbourhood of the vertex $D$. Set the notation as in the proof of Lemma \ref{spherus}.
 Suppose first that some of geodesic segments $(D, D^{ABD})$, $(D, D^{ACD})$ or $(D, D^{BCD})$ is nondegenerate and
 let $D'$ be a vertex from the link of $D$ lying on this segment. Without loss of generality we can assume that 
 $D'\in (D, D^{ABD})$. Since $(D, D^{ABD}) \subseteq \gamma_{AD} \cap \gamma_{BD}$,
 it holds that $D' \in X_2 \cap X_3 \cap X_4$. Note that $d(D',A)=d(D,A)-1$, $d(D',B)=d(D,B)-1$ and $d(D',C)\leq d(D,C)+1$.
 Thus concerning $D$ replaced with $D'$, we obtain another four points with the same properties as $A, B, C$ and $D$, but 
 with the smaller sum of distances between them. This contradicts the minimality of this sum for $A, B, C, D$.
 Therefore we can assume that all geodesic segments $(D, D^{ABD})$,  $(D, D^{ACD})$ and $(D, D^{BCD})$ are degenerated, i.e 
 $D=\allowbreak D^{ABD}= \allowbreak D^{ACD}=D^{BCD}$.
 
 Note that $D$ cannot be equal to any of points $A^{ACD}, A^{ABD}, B^{ABD}, B^{BCD}, C^{BCD}, C^{ACD}$
 (since it does not lay on any geodesic connecting $A$ to $B$, $A$ to $C$ or $B$ to $C$).
 Denote by $v^{ACD}_A$ the point closest to $D$ on the geodesic $(D, A^{ACD})$. Similarly denote set $v^{ABD}_A$, $v^{ABD}_B$,
 $v^{BCD}_B$, $v^{BCD}_C$ and $v^{ACD}_C$.
 Since $i_{A,D}^{-1}(v^{ABD}_A)$, $i_{A,D}^{-1}(v^{ACD}_A)$ and $i_{A,D}^{-1}(D)$ span $2$-simplex in $D_{A,D}$ and 
 $i_{A,D}$ is a simplicial map it follows that $v^{ABD}_A$ and $v^{ACD}_A$ are also connected if only $v^{ABD}_A \neq v^{ACD}_A$. 
 Same is true for pairs $v^{ABD}_B$, $v^{BCD}_B$ and $v^{BCD}_C$, $v^{ACD}_C$.
 Similarly $i_{A,B,D}^{-1}(v^{ABD}_A)$, $i_{A,B,D}^{-1}(v^{ABD}_B)$ and $i_{A,B,D}^{-1}(D)$ span $2$-simplex in $D_{A,B,D}$ and
 so $v^{ABD}_A$ and $v^{ABD}_B$ are connected by the fact that $i_{A,B,D}$ is a simplicial map and under the assumption that
 $v^{ABD}_A \neq v^{ABD}_B$.
 Same is true for pairs $v^{ACD}_A$, $v^{ACD}_C$ and $v^{BCD}_B$, $v^{BBD}_C$.
 If there are no identifications we end up in the situation presented on the Figure \ref{6kat'}.
  
  \begin{figure}[ht!]
   \begin{center}
    \includegraphics[width=0.4\textwidth]{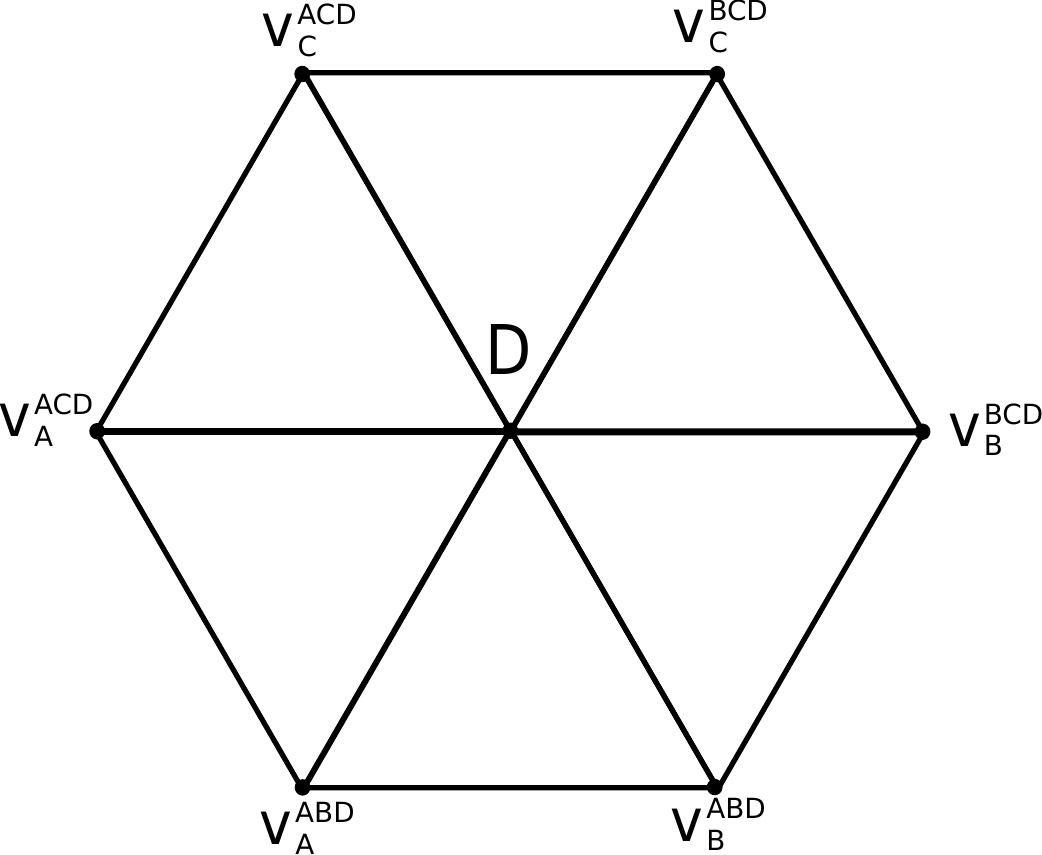}
    \caption{Part of the link of $D$ without any identification.}\label{6kat'}
   \end{center}
  \end{figure}
  
  We examine now possible identifications between these points.
  Note that $v^{ACD}_A \neq v^{ACD}_C$ (otherwise the geodesic segment $(D,D^{ACD})$ wouldn't be degenerated) and similarly 
  $v^{ABD}_A \neq v^{ABD}_B$ and $v^{BCD}_B \neq v^{BCD}_C$.
  If $v^{ACD}_A$ would be identified with any of these vertices, excluding $v^{ABD}_A$ or $v^{ACD}_C$, it would imply that
  $v^{ACD}_A \in X_2 \cap X_3 \cap X_4$. But then again replacing $D$ with $v^{ACD}_A$, we obtain another four points with 
  the same properties as $A, B, C, D$ and smaller sum of distances between them, which violates our assumption.
  Thus the only identifications left to concern are: $v^{ACD}_A$ with $v^{ABD}_A$, $v^{ABD}_B$ with $v^{BCD}_B$ and
  $v^{ACD}_C$ with $v^{BCD}_C$.
 
 Before proceeding with the proof of the theorem we show the following claim:
 
 %\begin{claim}\label{pomo}
 % Every pentagon in a systolic complex, cantains a vertex which is connected by edges with its two opposite vertices.
 %\end{claim}

 \begin{claim}\label{link}
  If vertices $v^{ABD}_A$, $v^{ABD}_B$ are both connected with another vertex 
  $w \in \{v^{ACD}_A, \allowbreak v^{BCD}_B, \allowbreak v^{BCD}_C, \allowbreak v^{ACD}_C \}$, 
  then there exists a pairwise connected triple of points $ v_A, v_B, v_C$ in the link of $D$, such that $v_A \in (A,D)$,
  $v_B \in (B,D)$ and $v_C \in (C,D)$.
 \end{claim}

 \begin{proof}
  It is enough to show that there exist a pairwise connected triple of points $v_A \in \{ v^{ACD}_A, \allowbreak v^{ABD}_A \}$
  $v_B \in \{ v^{ABD}_B, v^{BCD}_B\}$ and $v_C \in \{v^{BCD}_C, \allowbreak v^{ACD}_C\}$. 
  If $w \in \{v^{BCD}_C, v^{ACD}_C \}$ the statement is trivially true. Otherwise we need to consider separately cases with the different 
  numbers of identifications between vertices
  $v^{ACD}_A, \allowbreak v^{ABD}_A, \allowbreak v^{ABD}_B, \allowbreak v^{BCD}_B, \allowbreak v^{BCD}_C, \allowbreak v^{ACD}_C$.
 
  \begin{enumerate}
 
   \item No identifications. If  $w = v^{ACD}_A$ then 
   $v^{ACD}_A, \allowbreak v^{ABD}_B, \allowbreak v^{BCD}_B, \allowbreak v^{BCD}_C, \allowbreak v^{ACD}_C$
   form a pentagon. Note that
   every pentagon in a systolic complex, contains a vertex $d$ which is connected by edges with its two opposite vertices.
   Then $v_A=v^{ACD}_A$, $v_B=v^{ABD}_B$, $v_C=v_C^{ACD}$   (if $d = v^{ABD}_B$ or $d = v^{ACD}_C$) or
   $v_A=v^{ACD}_A$, $v_B=v^{BCD}_B$, $v_C=v_C^{BCD}$   (if $d = v^{ACD}_A$) or
   $v_A=v^{ACD}_A$, $v_B=v^{ABD}_B$, $v_C=v_C^{BCD}$   (if $d = v^{BCD}_C$) or
   $v_A=v^{ACD}_A$, $v_B=v^{BCD}_B$, $v_C=v_C^{ACD}$   (if $d = v^{BCD}_B$).
   See Figure \ref{6kata} for an illustration.
   Case when $w=v^{BCD}_B$ is symmetric to the one described above.
   
   \begin{figure}[ht!]
    \begin{center}
     \includegraphics[width=0.8\textwidth]{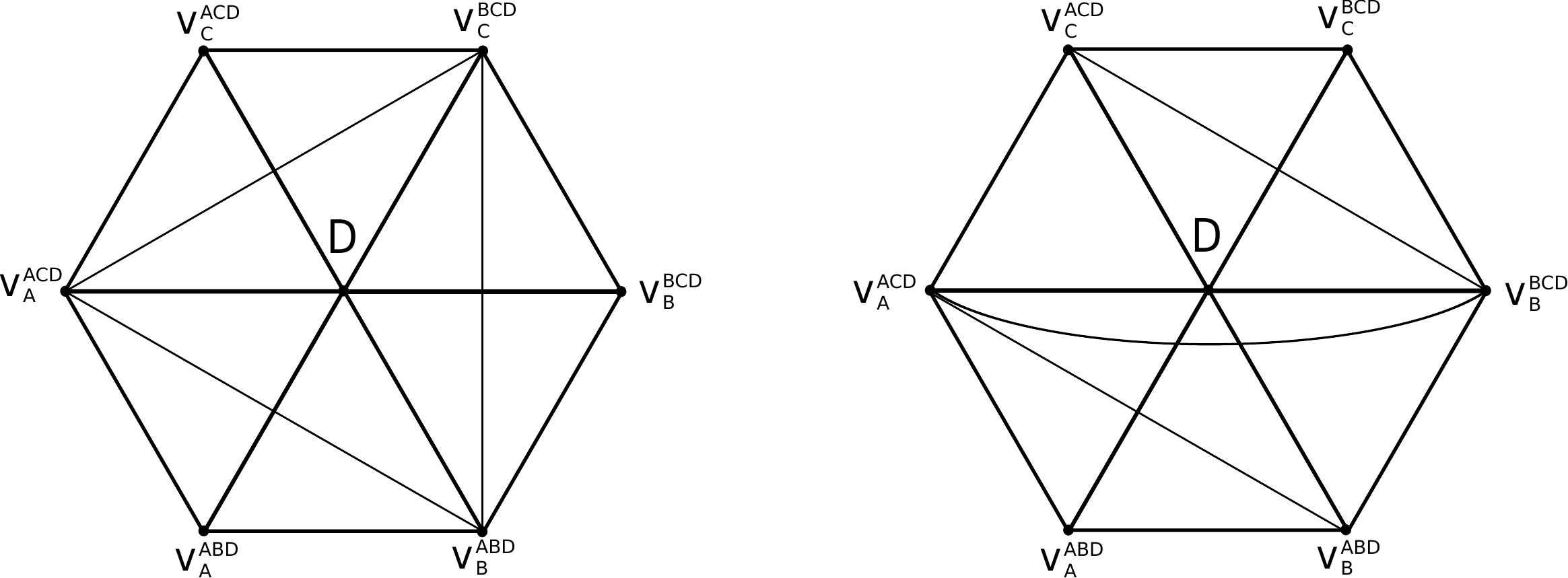}
     \caption{No identifications, $w = v^{ACD}_A$. On the left $d=v^{BCD}_C$. On the right $d=v^{BCD}_B$.}\label{6kata}
    \end{center}
   \end{figure}
   
   \item One identification. Assume that $v^{BCD}_C = v^{ACD}_C = v_C$, then 
   $v^{ACD}_A, \allowbreak v^{ABD}_A, \allowbreak v^{ABD}_B, \allowbreak v^{BCD}_B, \allowbreak v_C$ form a pentagon.
   Again we use the fact that every pentagon in a systolic complex, contains a vertex $d$ which is connected by edges with its two 
   opposite vertices.
   Then $v_A=v^{ACD}_A$, $v_B=v^{BCD}_B$ (if $d = v^{ACD}_A$ or $d = v^{BCD}_B$) or
   $v_A=v^{ABD}_A$, $v_B=v^{BCD}_B$ (if $d = v^{ABD}_A$) or
   $v_A=v^{ACD}_A$, $v_B=v^{ABD}_B$ (if $d = v^{ABD}_B$) or
   $v_A=v^{ABD}_A$, $v_B=v^{ABD}_B$ (if $d = v_C$)
   Cases when $v^{ABC}_A = v^{ABD}_A = v_A$ or $v^{BCD}_B = v^{ABD}_B = v_B$ are symmetric to the one described above.
    
   \item Two identifications. Assume $v^{ABC}_A = v^{ABD}_A = v_A$ and $v^{BCD}_B = v^{ABD}_B = v_B$. Then $v_A, \allowbreak v_B,
   \allowbreak v^{BCD}_C, \allowbreak v^{ACD}_C$ form a quadrilateral, which contains a diagonal from the fact that $X$ is systolic.
   If $v^{BCD}_C, v_A$ form a diagonal then $v_C = v^{BCD}_C$ and similarly if $v^{ACD}_C, v_B$ form a diagonal then $v_C = v^{ACD}_C$.
   Cases when $v^{ABC}_A = v^{ABD}_A = v_A, v^{BCD}_C = v^{ACD}_C = v_C$ or $v^{BCD}_B = v^{ABD}_B = v_B, v^{BCD}_C = v^{ACD}_C = v_C$
   are symmetric and allow the same reasoning.
   
   \item Three identifications. Then $v^{ABC}_A = v^{ABD}_A = v_A$, $v^{BCD}_B = v^{ABD}_B = v_B$, $v^{BCD}_C = v^{ACD}_C = v_C$
   and the statement of the claim is trivially satisfied.
   
  \end{enumerate}
 \end{proof}
    
  Now we are ready to give a crucial argument in this reasoning.
  Due to Lemma \ref{spherus} vertices $A, B, C, D$ determinate a simplicial map $f_{A,B,C,D}: S \to X$, where $S$ is a 
  triangulation of the $2$-sphere. 	
  From Lemma \ref{4.4} we know that this map can be extended to a simplicial map $F:\Delta \to X$, 
  where $\Delta$ is a triangulation of a $3$-ball such that $\partial \Delta = S$ and $\Delta$ has no internal vertices.
  Set $A':=F_{A,B,C,D}^{-1}(A)$ and similarly set $B', C', D'$.
  Also let $v'^{ACD}_A:=F_{A,B,C,D}^{-1}(v^{ACD}_A)$ and similarly set $v'^{ABD}_A, v'^{ABD}_B, v'^{BCD}_B$, $v'^{BCD}_C$, $v'^{ACD}_C$.
  Since $\Delta$ is a triangulation of a $3$-ball, there exists a vertex $w'$ spanning, together with 
  $D', v'^{ABD}_A, v'^{ABD}_B$ a $3$-simplex in $B$.
  Let $w= F_{A,B,C,D}(w')$. 
  We will examine  different positions of $w'$.
	
  Suppose first that $w' \in \partial D_{v,D}$ for some $v \in \{A,B,C\}$. 
  Then  $w' \in \{v'^{ACD}_A, \allowbreak v'^{BCD}_B, \allowbreak v'^{BCD}_C, \allowbreak v'^{ACD}_C\}$
  with the lower index $v$.
  Thus $w \in \{v^{ACD}_A, \allowbreak v^{BCD}_B, \allowbreak v^{BCD}_C, \allowbreak v^{ACD}_C\}$
  with the lower index $v$.
  By the Claim \ref{link} there exists a pairwise connected triple of points $v_A, v_B, v_C$ in the link of $D$, 
  such that $v_A \in (A,D)$, $v_A \in (B,D)$ and $v_C \in (C,D)$.
  If  $v_A \in X_1$ or $v_B \in X_1$ or $v_C \in X_1$ the theorem is proved as $v_A, v_B, v_C, D$ span a simplex 
  and $D \in X_2, X_3, X_4$. 
  Otherwise let $v'_A:=F_{A,B,C,D}^{-1}(v_A)$ and similarly set $v'_B, v'_C$.
  Since $v_A, v_B, v_C \notin X_1$ and $v'_A, v'_B, v'_C$ are not contained in the face of $\Delta$ which is the preimage of $X_1$
  there is a vertex opposite to $D'$.
  To be more precise there exist a vertex $d' \in \Delta$ different from $D'$, such that $v'_A, v'_B, v'_C, d'$ span a simplex. 
  Set $d:=F_{A,B,C,D}(d')$.
  Since $\Delta$ has no internal vertices $d \in X_i$ for some $i \in \{1,2,3,4\}$.
  Again if $d \in X_1$ the theorem is proved.
  Otherwise $d'$ is contained in a digonal or triangular surface associated to $D$.
  Without loss of generality assume that $d' \in D_{A,D}$.
  We will show that $d(D,d)=2$. 
  We assumed that $D' \neq d'$, so $d(D,d) \neq 0$.
  If $d(D',d')=1$ then $v'_A, v'_B, v'_C, d', D'$ span a $4$-simplex, which violates fact that 
  $\Delta$ is a triangulation of a $3$-ball.
  Thus $(D',v'_A, d')$ is a geodesic so $d(D',d')=2$ and $d(D,d) \leq 2$.
  Notice if $d(D,d)=0$ then image of $(A', D')$ is not a geodesic, a contradiction.
  If $d(D,d)=1$ and $d'$ is a boundary vertex of a digon, then going by the image of a geodesic $(A',d')$ and then by an edge $(d,D)$
  gives a path contradicting the distance between $A$ and $D$.
  If $d(D,d)=1$ and $d'$ is an internal vertex in $D_{A,D}$,
  there is another digonal surface $S'_{A,D}: D'_{A,D} \to X$ with smaller area then $D_{A,D}$.
  This surface is the restriction of $S_{A,D}$ to a digon between $A'$ and $d'$ and a geodesic segment from $d'$ to $D'$.
  A contradiction with the assumption that $S_{A,D}$ is a minimal digonal surface spanned on $A,D$.
  If $d'$ is contained in the triangular surface the similar reasoning applies.
  Thus we conclude that $d(D,d)=2$.
  Now let $v_d \in \{v_A, v_B, v_C \}$ be a vertex, which belongs to two convex subsets different from the one that $d$ belongs to.
  Since $(d,v_d,D)$ is a geodesic and by $3$-convexity $v_D \in X_2 \cap X_3 \cap X_4$. 
  But then replacing $D$ with $v_d$, we obtain another four points with 
  the same properties as $A, B, C, D$ and smaller sum of distances between them, which violates our assumption.
  
  Now consider $w' \in Int D_{v,D}$. If $w \in \{v^{ACD}_A, \allowbreak v^{BCD}_B, \allowbreak v^{BCD}_C, \allowbreak v^{ACD}_C\}$
  with the lower index $v$, then above reasoning applies.
  Otherwise $w'$ is an interior vertex of a simple digon contained in $D_{v,D}$. By Lemma \ref{digon} simple digons are 
  flat.
  Consider a digon in $\mathbb{R}^2_{\Delta}$, with the boundary consisting of two geodesics of length $n$. 
  Note that any interior vertex of this digon	is at distance at most $n-2$ from any of vertices spanning it.
  Thus if we apply $F_{A,B,C,D}$, we obtain a shortcut in a geodesic from $D$ to $v$, which gives a contradiction.
  
  Suppose now that $w' \in T_{v,u,D}$ for some $v,u \in \{A,B,C\}$. 
  If $w'$ is a vertex from a geodesic $(D',v')$ or $(D',u')$, similarly we have 
  $w \in \{v^{ACD}_A, \allowbreak v^{BCD}_B, \allowbreak v^{BCD}_C, \allowbreak v^{ACD}_C\}$
  and the previous reasoning applies.
  If $w'$ is at distance $2$ from $D'$ (there is only one such an internal vertex in $T_{v,u,D}$),
  there is another triangular surface $S'_{v,u,D}: T'_{v,u,D} \to X$ with smaller area then $S_{v,u,D}$.
  This surface is the restriction of $S_{v,u,D}$ to a domain bounded by $(D',w') \cup (w',v') \cup (w',u') \cup (v',u')$.	
  A contradiction with the assumption that $T_{v,u,D}$ is a minimal triangular surface spanned on $v,u,D$.
  If $w'$ is in distance greater than $2$ from $D'$, consider paths $(D,w) \cup (w,v)$ and $(D,w) \cup (w,u)$. 
  It follows from the triangle inequality for $T_{v,u,D}$ that one of these paths is shorter than the respective geodesic. 
  A contradiction.
  
  Finally we consider what happen either $w' \in T_{A,B,C}$ or $w' \in D_{v,u}$ for some $v,u \in \{A,B,C\}$.
  Since $A,B,C \in X_1$, due to Lemma \ref{conv} we obtain that $w \in X_1$.
  Thus vertices $D, v^{ABD}_A, v^{ABD}_B, w$ span a $3$-simplex with desired properties.
	
\end{proof}

\end{document}